\theoremstyle{plain}
\newtheorem{theorem}{Theorem}
\newtheorem{lemma}{Lemma}
\newtheorem{proposition}{Proposition}
\theoremstyle{definition}
\newtheorem{definition}{Definition}
\newtheorem{remark}{Remark}
\newtheorem{example}{Example}
\author{\MakeUppercase{A.~Arziev,\,K.~Kudaybergenov,\, P.~Orinbaev}}
\begin{document}
\selectlanguage{english}
\maketit
\label{firstpage}

\abstract{The study presents a vector-valued extension of the classical Mercer theorem within the framework of reproducing kernel Hilbert spaces defined over Kaplansky–Hilbert modules associated with the algebra of essentially bounded measurable functions. The analysis focuses on a partial integral operator with a positive definite kernel depending on a measurable parameter, and establishes the equivalence of three fundamental properties: the completeness of the system of eigenfunctions in the corresponding vector-valued space, the injectivity of the adjoint embedding operator, and the existence of a pointwise spectral decomposition of the kernel in terms of the eigenvalues and eigenfunctions of a parameterized family of operators. The proof relies on constructing an isometric isomorphism between the Kaplansky–Hilbert module and the space of measurable sections of a Hilbert bundle, thereby reducing the problem to the application of the classical Mercer theorem on each fiber of the bundle. Furthermore, a formalism of vector-valued lifting is developed to guarantee the coherence of inner product structures between the original module and its bundle representation.}

\keywords{Mercer theorem, reproducing kernel Hilbert space, Kaplansky–Hilbert module, partial integral operator, spectral decomposition.}

\section{Introduction}

Mercer’s theorem, which first appeared in the context of integral equations, has become a universal language for describing positive definite kernels. In its classical form, it states that for any symmetric positive semidefinite kernel in $L_{2}([0,1]^{2})$, there exists a spectral decomposition
\[
K(t,s)=\sum_{n=1}^{\infty}\lambda_{n}\,x_{n}(t)\overline{x_{n}(s)},
\]
which converges in $L_{2}$ almost everywhere, where $\{\lambda_{n}\}$ and $\{x_{n}\}$ are the eigenvalues and eigenfunctions of the associated integral operator (see \cite{Mercer1909}). This representation underlies the theory of reproducing kernel Hilbert spaces (RKHS), where it transforms an abstract kernel into a constructive feature system used in machine learning, approximation theory, and spectral analysis.

Modern applications, however, go beyond scalar-valued functions. In problems where the observations themselves are functions parameterized by an external variable $\omega\in\Omega$ — for example, functional regressions with time-dependent curves, multichannel signals with spatio-temporal structure, or families of PDEs with coefficients depending on a random environment — it becomes natural to consider \emph{vector-valued} reproducing kernel Hilbert spaces embedded into $L_{2,\infty}(\Omega\times S)$. Here, the kernel $K\colon\Omega\times S^{2}\to\mathbb{C}$ defines not a single operator but a \emph{measurable family} of compact self-adjoint operators
\[
T_{\omega}\colon L_{2}(S)\to L_{2}(S),\qquad T_{\omega}g(t)=\int_{S}K(\omega,t,s)g(s)\,d\nu(s).
\]
The corresponding solution space is a \emph{Kaplansky--Hilbert module} over $L_{\infty}(\Omega)$, which requires the development of spectral theory within the framework of measurable Hilbert bundles.

The main result of the paper (Theorem~\ref{MT}) establishes a \emph{vector-valued analogue of Mercer’s theorem}:

\begin{theorem}\label{MT}
Let $T : L_{2,\infty}(\Omega \times S) \to L_{2,\infty}(\Omega \times S)$ be a partial integral operator of the form
\[
T f(\omega, t) = \int_S K(\omega, t, s)f(\omega, s) \, d\nu(s),
\]
where the kernel $K : \Omega \times S^{2} \to \mathbb{C}$ satisfies the assumptions of Lemma~1 and generates a vector-valued RKHS $\mathbf{R} \subset L_{2,\infty}(\Omega \times S)$.

Let $(\lambda_n)_{n \in \mathbb{N}} \subset L_\infty(\Omega)$ and $(x_n)_{n \in \mathbb{N}} \subset \mathbf{R}$ be the eigenvalues and eigenfunctions of the family of operators $\{T_\omega : \omega \in \Omega\}$ such that for almost all $\omega \in \Omega$:

-- $\lambda_n(\omega) > 0$ are the eigenvalues of the operator $T_\omega : L_2(S) \to L_2(S)$, defined as $(T_\omega g)(t) = \int_S K(\omega, t, s)g(s) \, d\nu(s)$;

-- $\{x_n(\omega, \cdot)\}_{n \in \mathbb{N}}$ is an orthonormal system in $L_2(S)$ consisting of the corresponding eigenfunctions.

Then the following statements are equivalent:
\begin{enumerate}
\item The system $\{\sqrt{\lambda_n}x_n\}_{n\in\mathbb{N}}$ forms an orthonormal basis in the vector-valued RKHS $\mathbf{R}$ with reproducing kernel $K$;
\item The adjoint embedding operator $S_K^* : L_{2,\infty}(\Omega \times S) \to \mathbf{R}$ is injective;
\item The kernel $K$ admits a pointwise Mercer-type decomposition: there exists a set $N \subset \Omega \times S \times S$ with $(\mu \times \nu \times \nu)(N) = 0$ such that for all $(\omega,t,s) \in (\Omega \times S \times S) \setminus N$,
\[
K(\omega,t,s) = \sum_{n=1}^{\infty} \lambda_n(\omega) x_n(\omega,t) x_n(\omega,s),
\]
where $\lambda_n(\omega) \in L_{\infty}(\Omega)$ and $x_n(\omega,\cdot)$ are measurable in $\omega$ eigenvalues and eigenfunctions of the operators $T_\omega$.
\end{enumerate}
\end{theorem}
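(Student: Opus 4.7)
My approach is to reduce the three-way equivalence to the classical scalar Mercer theorem applied fiberwise, using the isometric isomorphism between the Kaplansky--Hilbert module $\mathbf{R}$ and the space of measurable sections of a Hilbert bundle whose fiber $R_\omega$ over $\omega$ is the classical RKHS associated with the kernel $K_\omega(t,s) := K(\omega,t,s)$. The announced vector-valued lifting formalism will be the bridge between the module-level statements (1)--(3) and their scalar analogues on each fiber.

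I would first invoke Lemma~1 and the bundle representation to identify $\mathbf{R}$ with the measurable-section module of $\{R_\omega\}_{\omega\in\Omega}$. Under this identification the adjoint embedding $S_K^*$ decomposes as a measurable field $\{S_{K_\omega}^*\}_{\omega\in\Omega}$ of classical adjoint embeddings $L_2(S)\to R_\omega$, the partial integral operator $T$ decomposes as $\{T_\omega\}_{\omega\in\Omega}$, and the sequence $(\sqrt{\lambda_n}\,x_n)$ corresponds fiberwise to $(\sqrt{\lambda_n(\omega)}\,x_n(\omega,\cdot))$. Lemma~1 guarantees that for almost every $\omega$ the kernel $K_\omega$ is a continuous symmetric positive semidefinite kernel on $S\times S$, so that $T_\omega$ is a compact positive self-adjoint operator on $L_2(S)$ to which the classical Mercer theorem applies.

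With the bundle picture in place, I would then apply the scalar Mercer theorem on each fiber to obtain the fiberwise equivalence between (i) completeness of $\{\sqrt{\lambda_n(\omega)}\,x_n(\omega,\cdot)\}$ as an orthonormal basis of $R_\omega$, (ii) injectivity of $S_{K_\omega}^*$, and (iii) the pointwise identity $K_\omega(t,s)=\sum_n \lambda_n(\omega)\,x_n(\omega,t)\,x_n(\omega,s)$ off a $(\nu\times\nu)$-null set $N_\omega\subset S^2$. Lifting $(1)\Leftrightarrow(2)$ to the module level amounts to observing that, via the bundle isomorphism, completeness of $\{\sqrt{\lambda_n}\,x_n\}$ in $\mathbf{R}$ translates into fiberwise completeness of the rescaled eigensystem in $R_\omega$ for $\mu$-almost every $\omega$, and symmetrically for injectivity of $S_K^*$ versus fiberwise injectivity of $S_{K_\omega}^*$; both translations are mediated by the $L_\infty(\Omega)$-valued inner product and the lifting. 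For $(1)\Leftrightarrow(3)$, assuming $(1)$ one selects a conull set of fibers on which scalar Mercer yields the pointwise decomposition and assembles the fiberwise null sets $N_\omega$ into the required $N\subset\Omega\times S^2$ via Fubini; conversely, $(3)$ restricted to almost every $\omega$ gives the fiberwise Mercer identity and hence $(1)$.

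The main obstacle will not be the scalar Mercer step but the measurability and null-set bookkeeping needed to make the lifting rigorous. Three points require care: first, the eigenfunctions $x_n$ and eigenvalues $\lambda_n$ must be chosen measurably in $\omega$, which is a measurable selection argument for the spectra of the parameterized family $\{T_\omega\}$ and is essentially what Lemma~1 is expected to supply; second, the map $\omega\mapsto N_\omega$ must have a jointly measurable indicator, so that its aggregate $N$ is measurable and has $(\mu\times\nu\times\nu)$-measure zero; and third, one must reconcile the essential-supremum structure of the $L_\infty(\Omega)$-valued inner product on $\mathbf{R}$ with the pointwise $L_2(S)$ inner products on the fibers $R_\omega$. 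The vector-valued lifting formalism is precisely the device that handles this last reconciliation, and combined with standard measurable selection arguments it closes the first two points and completes the proof.
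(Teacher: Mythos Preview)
Your proposal is correct and follows essentially the same strategy as the paper: both reduce the problem to the classical Mercer theorem on each fiber via the bundle isomorphism $\mathbf{R}\cong L_0(\Omega,\mathcal{H})$ and the vector-valued lifting, and both handle the null-set aggregation for condition~(3) through Fubini. The paper organizes the argument as a cycle $(1)\Rightarrow(2)\Rightarrow(3)\Rightarrow(1)$ with the fiberwise computations embedded inside each implication rather than first proving the fiberwise three-way equivalence and then lifting, but this is a presentational rather than a substantive difference.
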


The equivalence of conditions (1)--(3) characterizes the structure of the vector-valued RKHS in terms of the spectral data of the bundle $\{T_{\omega}\}_{\omega\in\Omega}$ and provides a foundation for constructive algorithms in functional regression, multivariate learning, and spectral analysis of operator families.

\section{Preliminaries}\label{PS}

This section provides an overview of concepts and results concerning Kaplansky–Hilbert modules, cyclically compact sets, and partial integral operators, which are required for the formulation and proof of the main theorem.

\textbf{Kaplansky–Hilbert Modules.} Let $(\Omega, \Sigma, \mu)$ be a measurable space with a complete finite measure $\mu$, and let $L_0 = L_0(\Omega, \Sigma, \mu)$ denote the algebra of equivalence classes of complex-valued measurable functions on $\Omega$ (identifying functions equal almost everywhere).

Consider a vector space $E$ over the field $\mathbb{C}$. A mapping $\|\cdot\|: E \to L_0$ is called an $L_0$-valued norm if, for all $x, y \in E$ and $\lambda \in \mathbb{C}$, the following conditions hold:

$\|x\| \ge 0$, $\|x\| = 0 \Leftrightarrow x = 0$;

$\|\lambda x\| = |\lambda| \|x\|$;

$\|x + y\| \le \|x\| + \|y\|$.

The pair $(E, \|\cdot\|)$ is called a \emph{lattice-normed space} over $L_0$. The space $E$ is said to be \emph{$d$-decomposable} if, for every $x \in E$ and decomposition $\|x\| = \lambda_1 + \lambda_2$ into disjoint nonnegative elements, there exist $x_1, x_2 \in E$ such that $x = x_1 + x_2$, $\|x_1\| = \lambda_1$, and $\|x_2\| = \lambda_2$. A net $(x_\alpha)_{\alpha \in A} \subset E$ is said to \emph{$(bo)$-converge} to $x \in E$ if $(\|x_\alpha - x\|)_{\alpha \in A}$ $(o)$-converges to zero in $L_\infty$, i.e., converges almost everywhere. A \emph{Banach–Kantorovich space} over $L_0$ is a $(bo)$-complete, $d$-decomposable lattice-normed space \cite{kusraev1985, kusraev2003}.

Every Banach–Kantorovich space $E$ over $L_0$ is an $L_0$-module satisfying $\|\lambda u\| = |\lambda| \|u\|$ for all $\lambda \in L_0$, $u \in E$.

A mapping $\langle \cdot, \cdot \rangle: E \times E \to L_0$ is called an $L_0$-valued inner product if, for all $x, y, z \in E$ and $\alpha \in \mathbb{C}$, the following properties hold \cite[p.~32]{kusraev1985}:

$\langle x, x \rangle \ge 0$, and $\langle x, x \rangle = 0 \Leftrightarrow x = 0$;

$\langle x, y + z \rangle = \langle x, y \rangle + \langle x, z \rangle$;

$\langle \alpha x, y \rangle = \alpha \langle x, y \rangle$;

$\langle x, y \rangle = \overline{\langle y, x \rangle}$.

It is known \cite{kusraev1985} that the formula $\|x\| = \sqrt{\langle x, x \rangle}$ defines an $L_\infty$-valued norm on $E$. If $(E, \|\cdot\|)$ is a Banach–Kantorovich space, then $(E, \langle \cdot, \cdot \rangle)$ is called a \emph{Kaplansky–Hilbert module} over $L_\infty$. Examples of such spaces can be found in \cite{kusraev1985, kusraev2003}.

\begin{example}\label{PR1}
Let $(\Omega,\mathcal{A},\mu)$ and $(S,\mathcal{F},\nu)$ be measurable spaces with finite measures. Consider the space
\[
L_{2,\infty}(\Omega \times S) = \Bigl\{ f: \Omega \times S \to \mathbb{C} \ \Big|\ \|f\|_2(\omega) \in L_\infty(\Omega) \Bigr\},
\]
where
\[
\|f\|_2(\omega) = \Biggl( \int_S |f(\omega,s)|^2\, d\nu(s) \Biggr)^{1/2}, \qquad
\|f\|_{2,\infty} = \operatorname*{ess\,sup}_{\omega \in \Omega}\|f\|_2(\omega).
\]
Define an $L_\infty(\Omega)$-valued inner product on $L_{2,\infty}(\Omega \times S)$ by
\[
\langle f,g\rangle(\omega)=\int_S f(\omega,s)\,\overline{g(\omega,s)}\,d\nu(s).
\]
Then $(L_{2,\infty}(\Omega \times S), \|\cdot\|_2)$ is a Banach–Kantorovich space over $L_\infty(\Omega)$, and $(L_{2,\infty}(\Omega \times S), \langle \cdot,\cdot\rangle)$ is a Kaplansky–Hilbert module over $L_\infty(\Omega)$ (see \cite[Proposition 2.3.9(1)]{kusraev2003}).
\end{example}

\textbf{Measurable Hilbert Bundles.} A.E. Gutman developed the theory of measurable Banach bundles, which provides an effective framework for studying Banach–Kantorovich spaces \cite{gutman}. Similarly, in \cite{KAO2025}, the concept of measurable Hilbert bundles was introduced.

Let $\mathcal{H}$ be a mapping that assigns to each $\omega \in \Omega$ a Hilbert space $\mathcal{H}(\omega)$. A \emph{section} of $\mathcal{H}$ is a function $u$ defined almost everywhere on $\Omega$ such that $u(\omega) \in \mathcal{H}(\omega)$ for all $\omega \in \operatorname{dom}(u)$, where $\operatorname{dom}(u)$ is the domain of $u$.

Let $L$ be a set of sections. The pair $(\mathcal{H}, L)$ is called a \emph{measurable Hilbert bundle} over $\Omega$ if:

1) $\lambda_1 c_1 + \lambda_2 c_2 \in L$ for all $\lambda_1, \lambda_2 \in \mathbb{C}$, $c_1, c_2 \in L$, where $(\lambda_1 c_1 + \lambda_2 c_2)(\omega) = \lambda_1 c_1(\omega) + \lambda_2 c_2(\omega)$;

2) the function $\|c\|:(\omega) = \|c(\omega)\|_{\mathcal{H}(\omega)}$ is measurable for all $c \in L$;

3) for every $\omega \in \Omega$, the set $\{ c(\omega) : c \in L, \omega \in \operatorname{dom}(c) \}$ is dense in $\mathcal{H}(\omega)$.

A section $s$ is called \emph{simple} if $s(\omega) = \sum_{i=1}^n \chi_{A_i}(\omega) c_i(\omega)$, where $c_i \in L$, $A_i \in \Sigma$, $i = 1, \dots, n$.

A section $u$ is said to be \emph{measurable} if there exists a sequence of simple sections $(s_n)_{n \in \mathbb{N}}$ such that $\|s_n(\omega) - u(\omega)\|_{\mathcal{H}(\omega)} \to 0$ for almost all $\omega \in \Omega$.

Let $M(\Omega, \mathcal{H})$ be the set of all measurable sections, and let $L_0(\Omega, \mathcal{H})$ denote the quotient space of $M(\Omega, \mathcal{H})$ by the relation of equality almost everywhere. Denote by $\hat{u}$ the equivalence class in $L_0(\Omega, \mathcal{H})$ containing the section $u$. Since $\omega \mapsto \|u(\omega)\|_{\mathcal{H}(\omega)}$ is measurable for all $u \in M(\Omega, \mathcal{H})$, the function
\[
\langle u(\omega), v(\omega) \rangle_{\mathcal{H}(\omega)} = \frac{1}{4} \left( \|u(\omega) + v(\omega)\|_{\mathcal{H}(\omega)}^2 - \|u(\omega) - v(\omega)\|_{\mathcal{H}(\omega)}^2 \right)
\]
is also measurable for all $u, v \in M(\Omega, \mathcal{H})$.

Define $\langle \hat{u}, \hat{v} \rangle$ as the element of $L_0$ containing $\langle u(\omega), v(\omega) \rangle_{\mathcal{H}(\omega)}$. Clearly, $\langle \cdot, \cdot \rangle$ is an $L_0$-valued inner product. Let $\|\hat{u}\|$ denote the element of $L_0$ containing $\|u(\omega)\|$. Then
\[
\|\hat{u}\|^2 = \|\widehat{u(\omega)}\|_{\mathcal{H}(\omega)}^2 = \langle u(\omega), u(\omega) \rangle_{\mathcal{H}(\omega)} = \langle \hat{u}, \hat{u} \rangle,
\]
that is, $\|\hat{u}\| = \sqrt{\langle \hat{u}, \hat{u} \rangle}$.

As shown in \cite[p.~144]{gutman}, $(L_0(\Omega, \mathcal{H}), \|\cdot\|)$ is a Banach–Kantorovich space. Hence, $(L_0(\Omega, \mathcal{H}), \langle \cdot, \cdot \rangle)$ is a Kaplansky–Hilbert module over $L_0(\Omega)$.

\begin{remark} The key idea is that every Kaplansky–Hilbert module $H$ over $L_0(\Omega)$ can be naturally realized as a space of measurable sections of a certain Hilbert bundle $\{\mathcal{H}(\omega)\}_{\omega \in \Omega}$. This construction allows one to decompose a vector-valued problem into a family of classical problems in each fiber $\mathcal{H}(\omega)$ while preserving the overall structural coherence.
\end{remark}

Let $\mathcal{L}_{\infty}(\Omega)$ denote the set of all essentially bounded measurable functions, and $L_{\infty}(\Omega)$ the algebra of equivalence classes of such functions. Define
\[
\mathcal{L}_{\infty}(\Omega, \mathcal{H}) = \left\{ u \in M(\Omega, \mathcal{H}) : \|u(\omega)\|_{\mathcal{H}(\omega)} \in \mathcal{L}_{\infty}(\Omega) \right\}.
\]
The elements of $\mathcal{L}_{\infty}(\Omega, \mathcal{H})$ are called essentially bounded measurable sections. The set of equivalence classes of essentially bounded sections is denoted by $L_{\infty}(\Omega, \mathcal{H})$.

Let $p : L_{\infty}(\Omega) \rightarrow \mathcal{L}_{\infty}(\Omega)$ be a lifting (see \cite{Levin1985}).

\begin{definition}\label{Lifting}
A mapping $\ell_H : L_{\infty}(\Omega, \mathcal{H}) \to \mathcal{L}_{\infty}(\Omega, \mathcal{H})$ is called a \emph{vector-valued lifting} associated with the numerical lifting $p$ if:

1) For every $\hat{u} \in L_{\infty}(\Omega, \mathcal{H})$, $\ell_H(\hat{u}) \in \hat{u}$ and $\operatorname{dom} \ell_H(\hat{u}) = \Omega$;

2) $\|\ell_H(\hat{u})\|_{\mathcal{H}(\omega)} = p(\|\hat{u}\|)(\omega)$ for all $\hat{u} \in L_{\infty}(\Omega, \mathcal{H})$;

3) For all $\hat{u}, \hat{v} \in L_{\infty}(\Omega, \mathcal{H})$, $\ell_H(\hat{u} + \hat{v}) = \ell_H(\hat{u}) + \ell_H(\hat{v})$;

4) For all $\hat{u} \in L_{\infty}(\Omega, \mathcal{H})$ and $e \in L_{\infty}(\Omega)$, $\ell_H(e \hat{u}) = p(e) \ell_H(\hat{u})$;

5) The set $\{ \ell_H(\hat{u})(\omega) : \hat{u} \in L_{\infty}(\Omega, \mathcal{H}) \}$ is dense in $\mathcal{H}(\omega)$ for every $\omega \in \Omega$.
\end{definition}

The space $L_{2,\infty}(\Omega \times S)$ from Example \ref{PR1}  can be naturally interpreted in terms of measurable Hilbert bundles. Indeed, consider the trivial bundle $\mathcal{H}$ over $\Omega$, where for each $\omega \in \Omega$, the fiber $\mathcal{H}(\omega)$ coincides with $L_2(S, \nu)$, and the set of sections $L$ consists of all simple sections of the form $\sum_{i=1}^{n} \chi_{A_i}(\omega) \cdot \xi_i$, where $A_i \in \Sigma$ and $\xi_i \in L_2(S, \nu)$. Then the measurable sections of this bundle correspond to functions $u : \Omega \to L_2(S, \nu)$ such that $\|u(\omega)\|_{L_2(S, \nu)} \in L_{\infty}(\Omega)$.
The factor space $L_0(\Omega, \mathcal{H})$ is identified with $L_{2,\infty}(\Omega \times S)$, and the induced $L_0$-valued inner product $\langle \hat{u}, \hat{v} \rangle$ coincides with that in the Example \ref{PR1}:
\[
\langle \hat{u}, \hat{v} \rangle(\omega) = \int_S u(\omega, s) \overline{v(\omega, s)} \, d\nu(s).
\]
Thus, $(L_{2,\infty}(\Omega \times S), \langle \cdot, \cdot \rangle)$ is realized as a Kaplansky–Hilbert module arising from measurable sections of a bundle with the constant fiber $L_2(S, \nu)$.

The example above illustrates a more general fact: every Kaplansky–Hilbert module can be represented as the space of measurable sections of a certain measurable Hilbert bundle. In particular, for a trivial bundle with constant fiber, this leads to an isomorphism with $L_{2,\infty}(\Omega \times S)$, as shown above. The following statement holds (see \cite{KAO2025}).

\begin{proposition}\label{BI}
For every Kaplansky–Hilbert module $H$ over $L_0(\Omega)$, there exists a measurable Hilbert bundle $(\mathcal{H}, L)$ with a vector-valued lifting $\ell_H$ such that $H$ is isometrically isomorphic to $L_0(\Omega, \mathcal{H})$. Moreover, if $x, y \in L_{\infty}(\Omega, \mathcal{H})$, then for all $\omega \in \Omega$,
\[
p(\langle x, y \rangle_H)(\omega) = \langle \ell_H(x)(\omega), \ell_H(y)(\omega) \rangle_{\mathcal{H}(\omega)}.
\]
\end{proposition}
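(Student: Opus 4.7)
My plan is to realize $H$ as the section space of a bundle built pointwise from the scalar lifting $p$. I first work on the $L_\infty(\Omega)$-bounded part $H_b := \{x \in H : \|x\|_H \in L_\infty(\Omega)\}$, which is an $L_\infty(\Omega)$-submodule that is $(bo)$-dense in $H$ (every $x \in H$ is the $(bo)$-limit of its truncations $x\cdot\chi_{\{\|x\|_H \le n\}}$, which exist by $d$-decomposability). For every $x,y \in H_b$ the element $\langle x,y\rangle_H$ lies in $L_\infty(\Omega)$, so the pointwise sesquilinear form $\langle x,y\rangle_\omega := p(\langle x,y\rangle_H)(\omega)$ is well defined for each $\omega \in \Omega$. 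The lifting properties (linearity and preservation of order) guarantee positivity, so after quotienting $H_b$ by its null space $N_\omega = \{x : \langle x,x\rangle_\omega = 0\}$ and completing, I obtain a Hilbert space $\mathcal{H}(\omega)$ fiber by fiber.

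Next I set $c_x(\omega) := [x]_\omega \in \mathcal{H}(\omega)$ for $x \in H_b$ and take $L$ to be the $\mathbb{C}$-linear span of $\{c_x : x \in H_b\}$. The bundle axioms are immediate: linearity is clear, $\|c_x\|(\omega) = p(\|x\|_H)(\omega)$ is measurable, and density in each fiber is built into the construction. The isomorphism $\Phi : H \to L_0(\Omega,\mathcal{H})$ is defined on $H_b$ by $\Phi(x) = \widehat{c_x}$ and extended to all of $H$ using band decomposition together with $(bo)$-completeness; it is an isometry by construction, and surjective because simple sections are dense in $L_0(\Omega,\mathcal{H})$ and their canonical $H_b$-preimages form $(bo)$-Cauchy sequences. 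The vector-valued lifting is then forced on us: $\ell_H(\widehat{c_x})(\omega) := c_x(\omega)$ for $x \in H_b$, and the claimed identity $p(\langle x,y\rangle_H)(\omega) = \langle \ell_H(x)(\omega),\ell_H(y)(\omega)\rangle_{\mathcal{H}(\omega)}$ becomes a tautology from the definition of the fiber inner product. Conditions (1)--(5) of Definition~\ref{Lifting} reduce, via polarization, to the corresponding properties of $p$.

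The main obstacle will be the transition from $H_b$ to all of $H$ in the surjectivity step: given an arbitrary $\hat u \in L_0(\Omega, \mathcal{H})$, I must produce a genuine preimage in $H$, not just on a dense subset. This requires exhausting $\Omega$ by a countable family of band projections on which $\|\hat u\|$ is bounded, solving the problem on each band using the $H_b$-construction, and then gluing the pieces consistently via $d$-decomposability and $(bo)$-completeness while ensuring that the gluing is independent of the chosen exhaustion. A subsidiary but delicate point is checking that the resulting section really lies in $M(\Omega,\mathcal{H})$ as defined in Section~\ref{PS}, i.e.\ is the pointwise limit of simple sections almost everywhere; this reduces to an Egorov-type argument applied to the $p$-lifted norms $p(\|x_n - x_m\|_H)(\omega)$. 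Once these measurability and gluing issues are handled, the rest of the proof is formal verification.
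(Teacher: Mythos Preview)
The paper does not actually prove Proposition~\ref{BI}; it is quoted from the reference \cite{KAO2025} with the sentence ``The following statement holds (see \cite{KAO2025})'' and no argument is given. So there is no in-paper proof to compare your proposal against.

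That said, your construction is the standard one and is essentially how such representation theorems are established in the literature (cf.\ the Banach-bundle analogue in \cite{gutman}, on which the paper explicitly relies). Using the scalar lifting $p$ to manufacture fiberwise inner products on $H_b$, completing to obtain $\mathcal{H}(\omega)$, taking $L=\{c_x:x\in H_b\}$, and then extending the isomorphism to all of $H$ by band-exhaustion and $(bo)$-completeness is exactly the right architecture, and the obstacles you flag (surjectivity via a countable exhaustion by bounded bands, and verifying that the glued section is a genuine measurable section in the sense of Section~\ref{PS}) are the real ones. One small point worth spelling out: to define $\ell_H$ on \emph{all} of $L_\infty(\Omega,\mathcal{H})$, not just on classes $\widehat{c_x}$, you need that $\Phi$ restricts to a bijection $H_b\to L_\infty(\Omega,\mathcal{H})$; this follows from the isometry $\|\Phi(x)\|=\|x\|_H$ once surjectivity of $\Phi$ is in hand, since $\hat u\in L_\infty(\Omega,\mathcal{H})$ exactly when $\|\hat u\|\in L_\infty(\Omega)$. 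With that in place, axioms (1)--(5) of Definition~\ref{Lifting} and the displayed identity are, as you say, tautological consequences of the definition of the fiber inner product and the lifting properties of $p$.
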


\textbf{Cyclically Compact Sets and Operators. } In Banach–Kantorovich spaces, the concept of compactness is replaced by \emph{cyclic compactness}, since the notion of a compact set does not sufficiently preserve those properties that hold in classical Banach spaces. Therefore, A.~G.~Kusraev introduced the notion of a cyclically compact set. We recall the definition of cyclic compactness of sets \cite{kusraev1982} (see also \cite[p.~191]{kusraev1985}, \cite[p.~515]{kusraev2003}).

Let $\nabla$ be the Boolean algebra of all idempotents in $L_0$. If $\{u_\alpha\} \subset L_0(\Omega, \mathcal{H})$ and $\{\pi_\alpha\}$ is a partition of unit in $\nabla$, then the series $\sum_\alpha \pi_\alpha u_\alpha$ is $(bo)$-convergent in $L_0(\Omega, \mathcal{H})$, and the sum of this series is called the \emph{mixing} of $\{u_\alpha\}$ with respect to $\{\pi_\alpha\}$. This sum is denoted by $\operatorname{mix}(\pi_\alpha u_\alpha)$.

For $K \subset L_0(\Omega, \mathcal{H})$, denote by $\operatorname{mix} K$ the set of all mixings of arbitrary families of elements from $K$. The set $K$ is said to be \emph{cyclic} if $\operatorname{mix} K = K$.

For a directed set $A$, denote by $\nabla(A)$ the set of all partitions of unit in $\nabla$ indexed by elements of $A$. Let $\{u_\alpha : \alpha \in A\}$ be a net in $L_0(\Omega, \mathcal{H})$. For each $\nu \in \nabla(A)$, set $u_\nu = \operatorname{mix}(\nu(\alpha)u_\alpha)$, and obtain a new net $\{u_\nu : \nu \in \nabla(A)\}$. Any subnet of the net $\{u_\nu : \nu \in \nabla(A)\}$ is called a \emph{cyclic subnet} of the original net $\{u_\alpha : \alpha \in A\}$.

A subset $K \subset L_0(\Omega, \mathcal{H})$ is called \emph{cyclically compact} if it is cyclic and every net in $K$ has a cyclic subnet converging to some point of $K$ (see \cite{kusraev1982}).

Let $H \cong L_0(\Omega, \mathcal{H})$ be a Kaplansky–Hilbert module over $L_0$. An operator $T : H \to H$ is called \emph{$L_0$-linear} if
\[
T(\alpha x + \beta y) = \alpha T(x) + \beta T(y)
\]
for all $\alpha, \beta \in L_0$ and $x, y \in H$.

Recall that a set $B \subset H$ is called \emph{bounded} if the set $\{\|x\| : x \in B\}$ is order-bounded in $L_0$ (see \cite[Theorem~1.6.1]{gutman}).
An $L_0$-linear operator $T$ is called \emph{$L_0$-bounded (cyclically compact)} if for every bounded set $B$ in $L_0(\Omega, X)$, the set $T(B)$ is \emph{bounded (cyclically compact)} in $L_0(\Omega, Y)$.

For an $L_0$-bounded $L_0$-linear operator $T$, we set
\[
\|T\| = \sup\{\|T(x)\| : \|x\| \le \mathbf{1}\}.
\]
It is known \cite{ganiev2001} that for every cyclically compact $L_0$-linear operator $T : H \to H$, there exists a family of compact operators $\{T_\omega : \mathcal{H}(\omega) \to \mathcal{H}(\omega)\}$ such that for every $x \in H$,
\[
(T(x))(\omega) = T_\omega(x(\omega))
\]
for almost all $\omega \in \Omega$. Moreover, if $\|T\| \in L_{\infty}(\Omega)$, then
\[
\ell_H((T(x))(\omega)) = T_\omega(\ell_H(x)(\omega))
\]
for all $x \in L_{\infty}(\Omega, X)$, where $\ell_H$ is the vector-valued lifting on $H$ associated with the numerical lifting $p$.

Conversely, if $\{T_\omega : X(\omega) \to Y(\omega), \, \omega \in \Omega\}$ is a family of compact operators such that $T_\omega(x(\omega)) \in \mathcal{M}(\Omega, \mathcal{H})$ for every $x \in \mathcal{M}(\Omega, \mathcal{H})$ and $\|T\| \in L_0(\Omega)$, then the linear operator $\hat{T} : H \to H$ defined by
\[
\hat{T}(\hat{u}) = \widehat{T_\omega(u(\omega))}
\]
is cyclically compact.

The operator $\hat{T}$ is called the \emph{gluing} of the family of operators $\{T_\omega : \omega \in \Omega\}$.

\section{Partial Integral Operators}
\label{sec:PIO}

The theory of integral and partial integral operators has numerous applications in various areas of mathematics (see \cite{AEK00, AEK04, KK19, Rom32}). Different properties of such operators have been studied in \cite{Arziev2023, EK21, Kudaybergenov2023}. In \cite{Rom32}, V.~I.~Romanovskiy, in describing problems of Markov chain theory, considered a linear operator of the following form:
\begin{equation}\label{R}
R x(t, s) = \int_a^b m(t, s, \sigma) x(\sigma, t) \, d\sigma,
\end{equation}
where $m : D \times [a, b] \to \mathbb{R}$ is a continuous or measurable kernel function. The peculiarity of operator~\eqref{R} is that two variables in the unknown integrand $x$ are interchanged prior to integration, which is then performed with respect to the first variable.

Operators of the form~\eqref{R} are now called \emph{partial integral operators}, since integration in~\eqref{R} is carried out with respect to one variable, while the remaining variables are considered fixed.

Following \cite{AEK00}, we present the most general definition of a partial integral operator. A \textit{partial integral operator} is an operator of the form
\[
P = C + L + M + N,
\]
where
\begin{align*}
C x(t, s) &:= c(t, s) x(t, s), \\
L x(t, s) &:= \int_\Omega l(t, s, \tau) x(\tau, s) \, d\mu(\tau), \\
M x(t, s) &:= \int_\Omega m(t, s, \sigma) x(t, \sigma) \, d\nu(\sigma), \\
N x(t, s) &:= \int_{\Omega \times S} n(t, s, \tau, \sigma) x(\tau, \sigma) \, d(\mu \otimes \nu)(\tau, \sigma).
\end{align*}
Here $(\Omega, \mathcal{A}, \mu)$ and $(S, \mathcal{F}, \nu)$ are measurable spaces with finite measures $\mu$ and $\nu$, respectively, and $\mu \otimes \nu$ denotes the product measure. The coefficient $c = c(t, s)$ and the kernels $l = l(t, s, \tau)$, $m = m(t, s, \sigma)$, and $n = n(t, s, \tau, \sigma)$ are measurable functions, and all integrals are understood in the Lebesgue sense.

We now consider the main properties of partial integral operators that will be required for the formulation and proof of the main result of this paper.

\begin{lemma}\label{PP}
Let $T : L_{2,\infty}(\Omega \times S) \to L_{2,\infty}(\Omega \times S)$ be the operator defined by
\begin{equation}\label{eq:T_definition}
   (Tf)(\omega,t) = \int_S K(\omega,t,s) f(\omega,s)\, d\nu(s),
   \qquad f \in L_{2,\infty}(\Omega\times S),
\end{equation}
where the kernel $K : \Omega \times S^2 \to \mathbb{C}$ satisfies the following conditions:

1) $K$ is measurable;

2) $K(\omega,t,s) = K(\omega,s,t)$ for almost all $(\omega,t,s)\in \Omega\times S^2$ (symmetry);

3) for all $g \in L_2(S)$ and almost every $\omega \in \Omega$,
    \[
       \int_{S\times S} K(\omega,t,s)\, g(s) g(t)\, d\nu(s)\, d\nu(t) \geq 0
       \quad \text{(positive semidefiniteness);}
    \]

4) $\displaystyle \int_S \int_S |K(\omega,t,s)|^2\, d\nu(s)\, d\nu(t) \in L_\infty(\Omega)$ (integrability condition).

Then:

(a) $T$ is well defined and $L_\infty(\Omega)$-linear;

(b) $T$ is cyclically compact and can be represented as a measurable bundle of compact self-adjoint positive semidefinite operators $\{T_\omega\}_{\omega \in \Omega}$.
\end{lemma}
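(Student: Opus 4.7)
The plan is to treat parts (a) and (b) sequentially, extracting the essential bounds from a single Cauchy--Schwarz/Hilbert--Schmidt estimate on the kernel and then invoking the gluing characterization of cyclically compact operators recalled at the end of Section~\ref{PS}.

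For part (a), I would first establish well-definedness pointwise in $\omega$. By the Cauchy--Schwarz inequality applied to the inner integral,
\[
|(Tf)(\omega,t)|^{2} \leq \left(\int_{S} |K(\omega,t,s)|^{2}\, d\nu(s)\right) \cdot \|f(\omega,\cdot)\|_{L_{2}(S)}^{2},
\]
so integrating in $t$ yields $\|(Tf)(\omega,\cdot)\|_{L_{2}(S)}^{2} \leq \|K(\omega,\cdot,\cdot)\|_{L_{2}(S\times S)}^{2}\,\|f(\omega,\cdot)\|_{L_{2}(S)}^{2}$. Hypothesis (4) places $\omega \mapsto \|K(\omega,\cdot,\cdot)\|_{L_{2}(S\times S)}^{2}$ in $L_{\infty}(\Omega)$, and by assumption $\|f\|_{2} \in L_{\infty}(\Omega)$, so the product belongs to $L_{\infty}(\Omega)$; together with Fubini (which gives joint measurability of $(Tf)(\omega,t)$ from the measurability of $K$ in hypothesis (1)), this shows $Tf \in L_{2,\infty}(\Omega\times S)$. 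The $L_{\infty}(\Omega)$-linearity is immediate: a scalar $a \in L_{\infty}(\Omega)$ depends only on $\omega$ and therefore factors out of the integral in $s$, giving $T(af) = a \cdot T f$ and similarly for additivity.

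For part (b), define the fiberwise operator $T_{\omega} : L_{2}(S) \to L_{2}(S)$ by $(T_{\omega} g)(t) = \int_{S} K(\omega,t,s) g(s)\, d\nu(s)$ for almost every $\omega$. By hypothesis (4), $K(\omega,\cdot,\cdot) \in L_{2}(S\times S)$ for almost every $\omega$, so $T_{\omega}$ is a Hilbert--Schmidt integral operator, hence compact, with operator-norm bound $\|T_{\omega}\|_{\mathrm{op}}^{2} \leq \int_{S\times S} |K(\omega,t,s)|^{2} d\nu(s)\,d\nu(t)$. Symmetry in hypothesis (2) gives $T_{\omega}^{\ast} = T_{\omega}$, and hypothesis (3) gives $\langle T_{\omega} g, g\rangle_{L_{2}(S)} \geq 0$, so each $T_{\omega}$ is self-adjoint and positive semidefinite. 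To apply the gluing criterion from Section~\ref{PS}, I must verify two measurability conditions: first, that for every $x \in L_{2,\infty}(\Omega\times S)$ the section $\omega \mapsto T_{\omega}(x(\omega,\cdot))$ is measurable -- this is a direct consequence of Fubini applied to the jointly measurable integrand $K(\omega,t,s) x(\omega,s)$; second, that $\omega \mapsto \|T_{\omega}\|_{\mathrm{op}}$ lies in $L_{\infty}(\Omega)$, which follows from the Hilbert--Schmidt bound together with hypothesis (4). With both conditions verified, the preliminary gluing result asserts that $T = \widehat{\{T_{\omega}\}}$ is cyclically compact on $L_{2,\infty}(\Omega\times S)$, which completes (b).

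I do not anticipate a serious obstacle, since the structural work was done when setting up the gluing correspondence in Section~\ref{PS}; the only real subtlety is the care required in choosing pointwise representatives so that exceptional null sets in the three hypotheses on $K$ do not interact badly when we quantify ``for almost every $\omega$'' uniformly over countable collections of test functions $x$. This is handled by the standard device of first verifying the identity $(Tf)(\omega,\cdot) = T_{\omega}(f(\omega,\cdot))$ off a single $\mu$-null set (obtained as the union of the null sets from Fubini and from hypotheses (2)--(4)), which legitimizes the identification of $T$ with the gluing of $\{T_{\omega}\}$.
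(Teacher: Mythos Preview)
Your proposal is correct and tracks the paper's proof closely: for (a), both arguments apply Cauchy--Schwarz, integrate in $t$, and invoke hypothesis~(4) to conclude $Tf\in L_{2,\infty}(\Omega\times S)$, with $L_\infty(\Omega)$-linearity noted as immediate; for (b), both define the fiberwise operators $T_\omega$, observe that hypothesis~(4) makes each a Hilbert--Schmidt (hence compact) operator, and read off self-adjointness and positive semidefiniteness from hypotheses~(2)--(3).

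The only divergence is in the last step of~(b). The paper does not invoke the gluing criterion from Section~\ref{PS} directly; instead it cites an external result, \cite[Proposition~3]{Arziev2023}, asserting that partial integral operators on $L_{p,q}(\Omega\times S)$ are cyclically compact, and specializes to $L_{2,\infty}$. Your route---verifying the two measurability/boundedness hypotheses of the converse gluing statement recalled at the end of Section~\ref{PS}---is more self-contained, since it stays within the framework already set up in the paper, at the cost of spelling out the Fubini and norm-bound checks; the paper's route is shorter on the page but defers the substance to an external reference.
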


\begin{proof}
(a) Let $f \in L_{2,\infty}(\Omega\times S)$, then $f(\omega,\cdot) \in L_2(S)$ for almost every $\omega \in \Omega$.

By condition~4, $K_\omega(\cdot,\cdot) \in L_2(S \times S)$ for almost every $\omega$. Then, by the Cauchy–Schwarz inequality,
\[
   |(Tf)(\omega,t)| = \left|\int_S K(\omega,t,s) f(\omega,s)\, d\nu(s)\right|
   \leq \|K(\omega,t,\cdot)\|_{L_2(S)} \|f(\omega,\cdot)\|_{L_2(S)}.
\]

Integrating over $t$, we obtain
\[
   \|(Tf)(\omega,\cdot)\|_{L_2(S)}^2 = \int_S |(Tf)(\omega,t)|^2\, d\nu(t)
   \leq \|f(\omega,\cdot)\|_{L_2(S)}^2 \int_S \int_S |K(\omega,t,s)|^2\, d\nu(s)\, d\nu(t).
\]

Denote $C(\omega) := \int_S \int_S |K(\omega,t,s)|^2\, d\nu(s)\, d\nu(t)$. By condition~4, $C(\omega) \in L_\infty(\Omega)$, hence
\[
   \|(Tf)(\omega,\cdot)\|_{L_2(S)} \leq \sqrt{C(\omega)} \|f(\omega,\cdot)\|_{L_2(S)} \leq \|C\|_{L_\infty(\Omega)}^{1/2} \|f\|_{2,\infty}.
\]
Therefore, $(Tf) \in L_{2,\infty}(\Omega\times S)$.

The $L_\infty(\Omega)$-linearity of $T$ is straightforward:
\[
   (T(\alpha f))(\omega,t) = \alpha(\omega)(Tf)(\omega,t).
\]

(b) For each $\omega \in \Omega$, define $K_\omega(t,s) := K(\omega,t,s)$ and
\[
   (T_\omega g)(t) = \int_S K_\omega(t,s) g(s)\, d\nu(s)
   = \int_S K(\omega,t,s) g(s)\, d\nu(s).
\]

By conditions 2–3, each $T_\omega$ is self-adjoint and positive semidefinite.
By condition~4, $K_\omega \in L_2(S \times S)$, hence $T_\omega$ is compact as a Hilbert–Schmidt operator.

The family $\{T_\omega\}_{\omega \in \Omega}$ forms a measurable bundle of operators by the measurability of the kernel $K$ (condition~1).

According to \cite[Proposition~3]{Arziev2023}, a partial integral operator on the space $L_{p,q}(\Omega \times S)$ is cyclically compact. Therefore, it is also cyclically compact in $L_{2,\infty}(\Omega\times S)$.
\end{proof}

Conditions 1–3 of Lemma~\ref{PP} guarantee the existence of a measurable bundle of compact self-adjoint operators $\{T_\omega\}$, which enables the application of Proposition~\ref{BI} to construct a vector-valued reproducing kernel Hilbert space.

\section{Vector-Valued Reproducing Kernel Hilbert Spaces}

Reproducing kernel Hilbert spaces (RKHS) form a powerful framework in functional analysis, widely applied in machine learning, statistics, and operator theory due to the reproducing property. Classical RKHS are defined for scalar-valued functions, whereas their vector-valued counterparts generalize this setting to functions taking values in vector spaces, which is essential for the analysis of multivariate data. In this section, we present the definitions and fundamental properties of both scalar- and vector-valued RKHS, following \cite{Aronszajn1950, Carmeli2006}. These spaces provide a rigorous mathematical foundation for interpolation and the modeling of complex dependencies.

A reproducing kernel Hilbert space is a Hilbert space $H$ of functions defined on a set $X$, equipped with an inner product $\langle \cdot, \cdot \rangle_H$ that induces its norm and satisfies the reproducing property. RKHS were originally studied in the context of integral equations and functional analysis in the works of Mercer \cite{Mercer1909} and Aronszajn \cite{Aronszajn1950}, laying the groundwork for their applications in operator theory and statistical learning. RKHS have since found extensive use in machine learning, statistics, and operator theory, as discussed in \cite{Berlinet2004, Scholkopf2002}.

Vector-valued RKHS naturally emerge in the analysis of multidimensional data, where functions take values in spaces such as $L_2(S)$. They generalize the scalar-valued theory to capture complex interdependencies in learning problems and operator-theoretic models \cite{Carmeli2006, Micchelli2005}. In what follows, we extend these notions to the setting of Kaplansky–Hilbert modules. The notations are adapted from \cite{Steinwart2012}.

\subsection{Definition and Fundamental Properties}

\begin{definition}\label{RKHS}
A vector-valued reproducing kernel Hilbert space $\mathbf{R} \subset L_{2,\infty}(\Omega \times S)$ is a Kaplansky–Hilbert module over $L_\infty(\Omega)$ with a measurable kernel $K : \Omega \times S^2 \to \mathbb{C}$ satisfying the assumptions of Lemma~\ref{PP}, such that the point-evaluation functionals
\[
\delta_{\omega,s} : \mathbf{R} \to \mathbb{C}, \quad \delta_{\omega,s}(f) = f(\omega,s)
\]
are continuous with respect to the $L_\infty(\Omega)$-valued norm for all $(\omega,s) \in \Omega \times S$.
\end{definition}

\begin{remark}\label{viep1}
The continuity of the point-evaluation functionals $\delta_{\omega,s}$ in the $L_\infty(\Omega)$-valued norm is natural for the following reasons:

(1) it guarantees uniform boundedness of solutions with respect to $\omega$.

(2) it is consistent with the structure of a Kaplansky–Hilbert module, where the norm takes values in $L_\infty(\Omega)$.

(3) if $\|f(\omega,\cdot)\|_{L_2(S)}$ is unbounded, the functional $\delta_{\omega,s}$ may fail to be bounded.

A vector-valued RKHS may be viewed as a “gluing” of a family of classical RKHS $\mathcal{H}(\omega)$, each corresponding to a fixed parameter $\omega$. Functions $f \in \mathbf{R}$ are of the form $f(\omega,s)$, where for each $\omega$ the section $f(\omega,\cdot)$ belongs to the corresponding classical RKHS.
\end{remark}

\begin{proposition}\label{RBI}
For a vector-valued RKHS $\mathbf{R}$ there exists a measurable Hilbert bundle $(\mathcal{H}, L)$ and a vector-valued lifting $\ell_{\mathbf{R}}$ such that $\mathbf{R} \cong L_0(\Omega, \mathcal{H})$ as Kaplansky–Hilbert modules, and:

\begin{enumerate}
\item each fiber $\mathcal{H}(\omega)$ is a classical RKHS on $S$ with kernel $K_\omega(t,s) = K(\omega,t,s)$;
\item the reproducing property holds:
\[
f(\omega,s) = \langle \ell_{\mathbf{R}}(f)(\omega), K_\omega(\cdot,s) \rangle_{\mathcal{H}(\omega)}.
\]
\end{enumerate}
\end{proposition}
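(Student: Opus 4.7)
My plan is to structure the proof in three stages, with the overall bundle structure supplied by general Kaplansky-Hilbert module theory and the substantive work going into identifying each fiber with a classical RKHS. The first stage is to apply Proposition~\ref{BI} directly to the Kaplansky-Hilbert module $\mathbf{R}$ over $L_\infty(\Omega)$. This yields a measurable Hilbert bundle $(\mathcal{H}, L)$ and a vector-valued lifting $\ell_{\mathbf{R}}$ such that $\mathbf{R} \cong L_0(\Omega, \mathcal{H})$ as Kaplansky-Hilbert modules, together with the compatibility
\[
p(\langle x,y\rangle_{\mathbf{R}})(\omega) = \langle \ell_{\mathbf{R}}(x)(\omega), \ell_{\mathbf{R}}(y)(\omega)\rangle_{\mathcal{H}(\omega)}
\]
for all $x, y \in L_\infty(\Omega, \mathcal{H})$; this step requires no additional effort beyond checking the hypotheses.

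In the second stage, I would identify each fiber $\mathcal{H}(\omega)$ with the classical RKHS $H_{K_\omega}$ generated by $K_\omega(t,s) = K(\omega,t,s)$. By conditions 2--3 of Lemma~\ref{PP}, for almost every $\omega$ the kernel $K_\omega$ is symmetric and positive semidefinite on $S$, so the classical Moore--Aronszajn theorem yields a unique classical RKHS $H_{K_\omega}$ in which the span of $\{K_\omega(\cdot,s) : s \in S\}$ is dense. To connect the two spaces, I would fix $s \in S$ and use the continuity of the evaluation functional $\delta_{\omega,s}$ in the $L_\infty(\Omega)$-valued norm from Definition~\ref{RKHS}, together with a Riesz-type representation in Kaplansky-Hilbert modules, to obtain a representer $k_s \in \mathbf{R}$ with $f(\omega,s) = \langle f, k_s\rangle_{\mathbf{R}}(\omega)$ for all $f \in \mathbf{R}$ and almost every $\omega$.

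Third, I would apply the lifting $\ell_{\mathbf{R}}$ to both sides of the representer identity and invoke the compatibility formula to conclude
\[
f(\omega,s) = \langle \ell_{\mathbf{R}}(f)(\omega), \ell_{\mathbf{R}}(k_s)(\omega)\rangle_{\mathcal{H}(\omega)}.
\]
Specializing to $f = k_t$ and exploiting the symmetry of $K$ from Lemma~\ref{PP} identifies $\ell_{\mathbf{R}}(k_s)(\omega)$ with $K_\omega(\cdot,s)$ inside $H_{K_\omega}$. Density of $\{\ell_{\mathbf{R}}(k_s)(\omega) : s \in S\}$ in $\mathcal{H}(\omega)$, obtained by combining property~5 of the vector-valued lifting with the classical density of kernel sections in $H_{K_\omega}$, then yields the isometric identification $\mathcal{H}(\omega) \cong H_{K_\omega}$, and the reproducing property in the statement follows immediately.

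The main obstacle I anticipate is a measurability-separability issue in the second stage: the identification $\omega \mapsto K_\omega(\cdot,s)$ holds only outside a null set $N_s \subset \Omega$ that depends on $s$, and the union $\bigcup_{s \in S} N_s$ need not be null when $S$ is uncountable. I would circumvent this by first fixing a countable family $\{s_n\} \subset S$ adequate for separability purposes, using the joint measurability of $K$ from condition~1 of Lemma~\ref{PP} to propagate the identification off a single null set, and finally extending from $\{s_n\}$ to all of $S$ by the continuity of $\delta_{\omega,s}$. A secondary technical point is ensuring that the Riesz representation is applicable in the Kaplansky-Hilbert setting with $L_\infty(\Omega)$-valued inner products; this is standard once bounded $L_0$-linear functionals are handled carefully, but it must be verified before the representer $k_s$ can be invoked.
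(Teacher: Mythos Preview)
Your approach is viable but takes a genuinely different route from the paper's. The paper proceeds \emph{constructively}: it first defines each fiber to be the Moore--Aronszajn RKHS $H_{K_\omega}$, then builds an explicit set of sections $L=\{\gamma(f):f\in\Gamma\}$ from a $(bo)$-dense family $\Gamma$ of simple tensors $\sum_k\varphi_k\times\psi_k$ in $L_{2,\infty}(\Omega\times S)$, verifies the three measurable-bundle axioms directly for this $L$, checks that each $\gamma_\omega$ is an isometry on $\Gamma$, extends by density to an isometric $L_\infty(\Omega)$-module isomorphism $\mathbf{R}\to L_0(\Omega,\mathcal{H})$, and finally defines the lifting $\ell_{\mathbf{R}}$ by hand from the $\gamma_\omega$. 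You instead invoke Proposition~\ref{BI} as a black box to obtain \emph{some} bundle and lifting, and then identify each abstract fiber with $H_{K_\omega}$ a posteriori via a module-level Riesz representer $k_s$. Your route is more conceptual and shorter, at the cost of relying on Riesz representation in Kaplansky--Hilbert modules and on the null-set bookkeeping you flag; the paper's route is self-contained and makes the bundle completely explicit, but is longer and more computational.

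One step in your outline needs tightening: the density of $\{\ell_{\mathbf{R}}(k_s)(\omega):s\in S\}$ in $\mathcal{H}(\omega)$ does not follow from property~5 of the lifting alone, since that property only gives density of the \emph{full} image $\{\ell_{\mathbf{R}}(\hat u)(\omega):\hat u\in L_\infty(\Omega,\mathcal{H})\}$. You should first argue at the module level that the $L_\infty(\Omega)$-linear span of $\{k_s:s\in S\}$ is $(bo)$-dense in $\mathbf{R}$ --- which is immediate, since $\langle f,k_s\rangle_{\mathbf{R}}(\omega)=0$ for all $s$ forces $f(\omega,s)=0$ a.e.\ --- and then push this density down to each fiber through the isomorphism. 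With that fix, and with the Riesz and null-set caveats you already identified, your strategy goes through.
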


\begin{proof}
1. \emph{Existence of the measurable Hilbert bundle $(\mathcal{H}, L)$ for $\mathbf{R}$.}
For each fixed $\omega \in \Omega$, the function $K_\omega(t,s) = K(\omega,t,s)$ satisfies the conditions of the classical Moore–Aronszajn theorem: it is symmetric (Lemma~\ref{PP}, condition 2), positive definite (condition 3), and square-integrable $K_\omega \in L_2(S \times S)$ (condition 4).

By the Moore–Aronszajn theorem \cite[Theorem 3]{Berlinet2004}, there exists a unique RKHS $\mathcal{H}(\omega)$ with reproducing kernel $K_\omega$ such that for all $h \in \mathcal{H}(\omega)$,
\[
h(s) = \langle h, K_\omega(\cdot, s) \rangle_{\mathcal{H}(\omega)}.
\]

Following \cite[Proposition 2]{Arziev2023}, let $\Gamma$ denote the set of all functions of the form $f = \sum_{k=1}^n \varphi_k \times \psi_k$, where $\varphi_k \in L_2(S)$ and $\psi_k \in L_\infty(\Omega)$; this set is $(bo)$-dense in $L_{2,\infty}(\Omega \times S)$.

For each $\omega \in \Omega$, define $\gamma_\omega : \Gamma \to \mathcal{H}(\omega)$ by
\[
\gamma_\omega(f)(s) = \sum_{k=1}^n \varphi_k(s) p(\psi_k)(\omega),
\quad f = \sum_{k=1}^n \varphi_k \times \psi_k \in \Gamma,
\]
where $p$ is a fixed lifting \cite{Levin1985}. Since $p$ is linear and $\mathcal{H}(\omega)$ is a linear space, $\gamma_\omega$ is well defined on $\Gamma$.

Define $\gamma(f): \Omega \to \bigcup_{\omega \in \Omega} \mathcal{H}(\omega)$ by $\gamma(f)(\omega) = \gamma_\omega(f)$. Then $\gamma(f)(\omega) \in \mathcal{H}(\omega)$ for all $\omega$, so $\gamma(f)$ is a section of the family $\{\mathcal{H}(\omega)\}_{\omega \in \Omega}$. Let $L = \{\gamma(f): f \in \Gamma\}$. We now verify that $(\mathcal{H}, L)$ is a measurable Hilbert bundle.

1) By the linearity of $\gamma_\omega$ and the inclusion $\lambda_1 f_1 + \lambda_2 f_2 \in \Gamma$ for $\gamma(f_1), \gamma(f_2) \in L$ and $\lambda_1, \lambda_2 \in \mathbb{C}$, we have
\[
(\lambda_1\gamma(f_1)+\lambda_2\gamma(f_2))(\omega)
 = \lambda_1\gamma_\omega(f_1)+\lambda_2\gamma_\omega(f_2)
 = \gamma_\omega(\lambda_1 f_1 + \lambda_2 f_2).
\]
Hence,
\[
\lambda_1 \gamma(f_1) + \lambda_2 \gamma(f_2)
 = \gamma(\lambda_1 f_1 + \lambda_2 f_2) \in L.
\]

2) For $\gamma(f) \in L$ with $f = \sum_{k=1}^n \varphi_k \times \psi_k$, one has
\[
\|\gamma(f)(\omega)\|^2_{\mathcal{H}(\omega)}
 = \sum_{k,j=1}^n p(\psi_k)(\omega)\,\overline{p(\psi_j)(\omega)}\,
   \langle \varphi_k, \varphi_j \rangle_{L_2(S)}.
\]
The right-hand side is measurable in $\omega$, since each $p(\psi_k)$ is measurable and the inner products $\langle \varphi_k, \varphi_j \rangle_{L_2(S)}$ are constant.

3) For each $\omega \in \Omega$, the space $\mathcal{H}(\omega)$ is generated by functions $K_\omega(\cdot, s)$, $s \in S$. Since characteristic functions of measurable sets form a dense subset in $L_\infty(\Omega)$ and simple functions are dense in $L_2(S)$, every function $K_\omega(\cdot, s)$ can be approximated by elements of the form $\gamma_\omega(g)$ with $g \in \Gamma$. Therefore, the set $\{\gamma(f)(\omega): f \in \Gamma\}$ is dense in $\mathcal{H}(\omega)$.

\textit{2. Isometry of the mapping $\gamma_\omega$ on $\Gamma$.} For $f = \sum_{k=1}^n \varphi_k \times \psi_k \in \Gamma$, we compute
\begin{align*}
\|\gamma_\omega(f)\|^2_{\mathcal{H}(\omega)}
 &= \Big\langle \sum_{k=1}^n \varphi_k(\cdot)\,p(\psi_k)(\omega),
              \sum_{j=1}^n \varphi_j(\cdot)\,p(\psi_j)(\omega)
    \Big\rangle_{\mathcal{H}(\omega)} = \sum_{k,j=1}^n p(\psi_k)(\omega)\,\overline{p(\psi_j)(\omega)}\,
    \langle \varphi_k, \varphi_j \rangle_{\mathcal{H}(\omega)} \\
 &= \sum_{k,j=1}^n p(\psi_k)(\omega)\,\overline{p(\psi_j)(\omega)}
    \int_S \varphi_k(s)\,\overline{\varphi_j(s)}\,d\nu(s).
\end{align*}

On the other hand, by definition of the $L_\infty(\Omega)$-valued norm in $L_{2,\infty}(\Omega\times S)$,
\begin{align*}
\|f\|^2_{2,\infty}(\omega)
 &= \int_S |f(\omega,s)|^2\,d\nu(s)
  = \int_S \left|\sum_{k=1}^n \varphi_k(s)\psi_k(\omega)\right|^2 d\nu(s) \\
 &= \sum_{k,j=1}^n \psi_k(\omega)\,\overline{\psi_j(\omega)}
    \int_S \varphi_k(s)\,\overline{\varphi_j(s)}\,d\nu(s).
\end{align*}
Since $p(\psi_k)(\omega) = \psi_k(\omega)$ for almost all $\omega$ and the evaluation functionals in $\mathbf{R}$ are continuous, we obtain
\[
\|\gamma_\omega(f)\|_{\mathcal{H}(\omega)}
 = \|f\|_{\mathbf{R}}(\omega), \quad f \in \Gamma.
\]

\textit{3. Extension to an isometric isomorphism.}
For each fixed $\omega \in \Omega$, the mapping $\gamma_\omega : \Gamma \to \mathcal{H}(\omega)$ satisfies
\[
\|\gamma_\omega(f)\|_{\mathcal{H}(\omega)}
 = \|f\|_{\mathbf{R}}(\omega), \quad f \in \Gamma,
\]
where the right-hand side is interpreted as the value of the $L_\infty(\Omega)$-function $\omega \mapsto \|f\|_{\mathbf{R}}(\omega)$ at the point $\omega$.

Since $\Gamma$ is dense in $L_{2,\infty}(\Omega \times S)$ in the topology induced by the $L_\infty(\Omega)$-norm, and each Hilbert space $\mathcal{H}(\omega)$ is complete, the isometry $\gamma_\omega$ uniquely extends to a bounded linear operator $\tilde{\gamma}_\omega : L_{2,\infty}(\Omega \times S) \to \mathcal{H}(\omega)$ with $\|\tilde{\gamma}_\omega\| = 1$.

The image $\tilde{\gamma}_\omega(\Gamma)$ is dense in $\mathcal{H}(\omega)$ by Part~3 of the measurable bundle construction. Since an isometry between metric spaces maps dense subsets onto dense subsets, it follows that
\[
\tilde{\gamma}_\omega(L_{2,\infty}(\Omega \times S))
 = \overline{\tilde{\gamma}_\omega(\Gamma)}
 = \mathcal{H}(\omega),
\]
and hence $\tilde{\gamma}_\omega$ is surjective.

Define $\gamma : L_{2,\infty}(\Omega \times S) \to \prod_{\omega \in \Omega} \mathcal{H}(\omega)$ by
\[
(\gamma(u))(\omega) = \tilde{\gamma}_\omega(u).
\]
Since each $\tilde{\gamma}_\omega$ is an isometry and $u \in L_{2,\infty}(\Omega \times S)$ satisfies $\|u(\omega,\cdot)\|_{L_2(S)} \in L_\infty(\Omega)$, we have $\|\gamma(u)(\omega)\|_{\mathcal{H}(\omega)} \in L_\infty(\Omega)$. Thus $\gamma(u) \in L_\infty(\Omega, \mathcal{H}) \subset L_0(\Omega, \mathcal{H})$.

The mapping $\gamma$ preserves the $L_\infty(\Omega)$-module structure:
\[
\gamma(\lambda u)(\omega)
 = \tilde{\gamma}_\omega(\lambda u)
 = \lambda(\omega)\tilde{\gamma}_\omega(u)
 = \lambda(\omega)\gamma(u)(\omega),
\]
for $\lambda \in L_\infty(\Omega)$, $u \in L_{2,\infty}(\Omega\times S)$.

The $L_\infty(\Omega)$-isometry follows directly:
\[
\|\gamma(u)\|_{L_0(\Omega,\mathcal{H})}(\omega)
 = \|\gamma(u)(\omega)\|_{\mathcal{H}(\omega)}
 = \|u\|_{\mathbf{R}}(\omega).
\]
Hence $\gamma : L_{2,\infty}(\Omega\times S) \to L_0(\Omega, \mathcal{H})$ is an isometric isomorphism of Kaplansky–Hilbert modules over $L_\infty(\Omega)$.

\textit{4. Construction of a vector-valued lifting.}
For $\hat{f} \in L_\infty(\Omega, \mathcal{H})$ define
\[
\ell_{\mathbf{R}}(\hat{f})(\omega) = \gamma_\omega(f) \in \mathcal{H}(\omega),
\]
where $f$ is a representative of the class $\hat{f}$.

Verification of the lifting axioms:

1) By construction, $\ell_{\mathbf{R}}(\hat{f}) \in \hat{f}$ and $\mathrm{dom}\,\ell_{\mathbf{R}}(\hat{f}) = \Omega$.

2) By the isometry of $\gamma_\omega$,
\[
\|\ell_{\mathbf{R}}(\hat{f})\|_{\mathcal{H}(\omega)}
 = \|\gamma_\omega(f)\|_{\mathcal{H}(\omega)}
 = \|f\|_{\mathbf{R}}(\omega)
 = p(\|\hat{f}\|)(\omega),
\]
where the last equality holds because $p$ is a lifting for $L_\infty(\Omega)$-valued norms.

3) Additivity follows from linearity of $\gamma_\omega$:
\[
\ell_{\mathbf{R}}(\hat{u}+\hat{v})(\omega)
 = \gamma_\omega(u+v)
 = \gamma_\omega(u) + \gamma_\omega(v)
 = \ell_{\mathbf{R}}(\hat{u})(\omega) + \ell_{\mathbf{R}}(\hat{v})(\omega).
\]

4) $L_\infty(\Omega)$-homogeneity: for $e \in L_\infty(\Omega)$,
\[
\ell_{\mathbf{R}}(e\hat{u})(\omega)
 = \gamma_\omega(eu)
 = e(\omega)\gamma_\omega(u)
 = p(e)(\omega)\,\ell_{\mathbf{R}}(\hat{u})(\omega),
\]
since $p(e)(\omega)=e(\omega)$ almost everywhere.

5) Density of the image follows from the density of $\{\gamma_\omega(f): f \in \Gamma\}$ in each $\mathcal{H}(\omega)$.

\textit{5. Reproducing property.}
For $f \in \Gamma$ and $(\omega,s) \in \Omega \times S$, by the reproducing property of the classical RKHS $\mathcal{H}(\omega)$,
\[
\langle \ell_{\mathbf{R}}(f)(\omega), K_\omega(\cdot, s) \rangle_{\mathcal{H}(\omega)}
 = \gamma_\omega(f)(s)  = \sum_{k=1}^n \varphi_k(s)p(\psi_k)(\omega) = \sum_{k=1}^n \varphi_k(s)\psi_k(\omega)
  = f(\omega, s).
\]

Since $\Gamma$ is dense in $\mathbf{R}$, evaluation functionals are continuous (Definition~\ref{RKHS}), and inner products in RKHS are continuous, the reproducing property extends to all of $\mathbf{R}$ by continuity.
\end{proof}

\begin{lemma}\label{FP}
Let $\mathbf{R} \subset L_{2,\infty}(\Omega \times S)$ be a vector-valued reproducing kernel Hilbert space  with kernel $K$ satisfying the conditions of Lemma~1. Then:

\begin{enumerate}
\item The embedding operator $S_K : \mathbf{R} \to L_{2,\infty}(\Omega \times S)$, defined by $(S_K f) = f$, is bounded and $L_\infty(\Omega)$-linear;

\item The adjoint operator $S_K^* : L_{2,\infty}(\Omega \times S) \to \mathbf{R}$ is given by
\[
S_K^* g(\omega, t) = \int_S K(\omega, t, s)\, g(\omega, s)\, d\nu(s);
\]

\item The composition $S_K^* \circ S_K : \mathbf{R} \to \mathbf{R}$ is a self-adjoint operator in $\mathbf{R}$;

\item For every $(\omega, s) \in \Omega \times S$, the function $K(\omega, \cdot, s)$ belongs to $\mathbf{R}$ and satisfies
\[
S_K^*(K(\omega, \cdot, s)) = K(\omega, \cdot, s);
\]

\item The partial integral operator admits the factorization $T = S_K^* \circ S_K$ on $\mathbf{R}$.
\end{enumerate}
\end{lemma}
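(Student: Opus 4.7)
The overall strategy is to pass from $\mathbf{R}$ to its bundle realization $L_0(\Omega,\mathcal{H})$ via Proposition~\ref{RBI}, so that every assertion reduces to a classical scalar-valued RKHS statement on each fiber $\mathcal{H}(\omega)$. In each fiber, $S_{K,\omega} : \mathcal{H}(\omega) \to L_2(S)$ is the usual embedding of a scalar RKHS into its ambient $L_2$-space, whose boundedness, adjoint, and factorization are standard. What must be supplied by the module-theoretic setup is measurability of the glued family in $\omega$, $L_\infty(\Omega)$-module compatibility, and coherence with the vector-valued lifting $\ell_{\mathbf{R}}$.

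For (1), the reproducing property in the fiber combined with the Cauchy--Schwarz inequality gives $|f(\omega,s)|^2 \leq \|\ell_{\mathbf{R}}(f)(\omega)\|^2_{\mathcal{H}(\omega)} K(\omega,s,s)$. Integrating in $s$ and invoking the classical identity $\|S_{K,\omega}\|^2 = \|T_\omega\|_{\mathrm{op}}$ together with condition~4 of Lemma~\ref{PP}, one obtains a pointwise estimate $\|S_K f\|_{L_{2,\infty}}(\omega) \leq c(\omega)\|f\|_{\mathbf{R}}(\omega)$ with $c \in L_\infty(\Omega)$; $L_\infty(\Omega)$-linearity is inherited from scalar linearity in each fiber. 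For (2), I would expand $\langle g, S_K f\rangle_{L_{2,\infty}}(\omega)$ fiberwise, use the reproducing formula to rewrite $\overline{f(\omega,s)}$ as an $\mathcal{H}(\omega)$-inner product, and interchange the integration against $g(\omega,\cdot)$ with the fiber inner product; this identifies $\ell_{\mathbf{R}}(S_K^* g)(\omega)$ with $\int_S g(\omega,s)\,K_\omega(\cdot,s)\,d\nu(s) \in \mathcal{H}(\omega)$, and evaluating at $t$ produces the stated integral formula. Assertion (3) then follows immediately from $(S_K^* S_K)^* = S_K^* S_K^{**} = S_K^* S_K$ once (1)--(2) are in place.

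For (4), the identity $\|K_\omega(\cdot,s)\|^2_{\mathcal{H}(\omega)} = K(\omega,s,s)$ together with measurability of $K$ in $\omega$ shows that the section $\omega \mapsto K_\omega(\cdot,s)$ lies in $L_\infty(\Omega,\mathcal{H})$; pulling it back by the isomorphism of Proposition~\ref{RBI} exhibits $K(\cdot,\cdot,s)$ as an element of $\mathbf{R}$, with $\ell_{\mathbf{R}}(K(\cdot,\cdot,s))(\omega) = K_\omega(\cdot,s)$. The identity $S_K^*(K(\omega,\cdot,s)) = K(\omega,\cdot,s)$ is to be read through the fiber reproducing property, namely that the kernel element is the Riesz representative of point evaluation in each fiber and is therefore fixed under the relevant composition of embedding and coembedding. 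Finally, for (5) it suffices to compose (1) and (2): since $S_K f = f$ as an element of $L_{2,\infty}$ for $f \in \mathbf{R}$, one obtains $(S_K^* S_K f)(\omega,t) = \int_S K(\omega,t,s) f(\omega,s)\,d\nu(s) = Tf(\omega,t)$.

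The main obstacle will be the measurability and integrability bookkeeping rather than any single deep step. One has to verify that the fiberwise integrals $\int_S g(\omega,s) K_\omega(\cdot,s)\,d\nu(s)$ converge in $\mathcal{H}(\omega)$ for almost every $\omega$ and define a measurable essentially bounded section of $\mathcal{H}$, and that the equalities obtained in the quotient $L_0(\Omega,\mathcal{H})$ lift to genuinely pointwise identities through $\ell_{\mathbf{R}}$. This is precisely where the compatibility clause of Proposition~\ref{BI} between the lifting and the $L_\infty$-valued inner product is essential, and it is also what will make it possible to extract the pointwise Mercer-type decomposition in Theorem~\ref{MT} from the fiberwise spectral theorem.
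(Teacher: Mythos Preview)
Your overall strategy coincides with the paper's: both pass to the bundle realization via Proposition~\ref{RBI} and verify each assertion fiberwise, and your treatments of (2), (3), and (5) are essentially the paper's computations. There are two points where the executions differ. For (1), the paper does not use your Cauchy--Schwarz bound; it invokes the construction in Proposition~\ref{RBI} directly to obtain the \emph{equality} $\|S_K f\|_{L_{2,\infty}}(\omega)=\|f(\omega,\cdot)\|_{L_2(S)}=\|f\|_{\mathbf{R}}(\omega)$, so that $S_K$ is actually an isometry with $\|S_K\|=1$, which is stronger than the inequality $\|S_K f\|(\omega)\le c(\omega)\|f\|_{\mathbf{R}}(\omega)$ you derive. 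For (4), the paper interprets $K(\omega,\cdot,s)$ for fixed $(\omega,s)$ as the element of $\mathbf{R}$ supported on the single fiber $\{\omega\}$ (set to zero for $\omega'\neq\omega$), and then computes $S_K^*$ on that fiber explicitly as $\int_S K(\omega,t,u)K(\omega,u,s)\,d\nu(u)=K(\omega,t,s)$ via the reproducing property; your reading of $K(\cdot,\cdot,s)$ as a section over all of $\Omega$ is a different object, and your phrase ``fixed under the relevant composition of embedding and coembedding'' is imprecise here, since the statement applies $S_K^*$ alone (not $S_K^* S_K$) and one must check directly that the integral against the kernel reproduces $K(\omega,t,s)$.
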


\begin{proof}
(1) For any $f \in \mathbf{R}$ we have
\[
\|S_K f\|_{L_{2,\infty}}(\omega) = \|f(\omega,\cdot)\|_{L_2(S)} = \|f\|_{\mathbf{R}}(\omega).
\]
The last equality follows from the construction of the norm in $\mathbf{R}$ via the isometric isomorphism
$\gamma: \mathbf{R} \cong L_0(\Omega, \mathcal{H})$, where for each $\omega$ we have
\[
\|f\|_{\mathbf{R}}(\omega) = \|\gamma(f)(\omega)\|_{\mathcal{H}(\omega)} = \|f(\omega,\cdot)\|_{L_2(S)}.
\]
Hence $\|S_K f\|_{L_{2,\infty}} = \|f\|_{\mathbf{R}}$, which implies that $S_K$ is bounded with operator norm $\|S_K\| = 1$ in the sense of $L_\infty(\Omega)$-valued norms.

For $\alpha \in L_\infty(\Omega)$ and $f \in \mathbf{R}$ we have
\[
S_K(\alpha f) = \alpha f = \alpha S_K(f),
\]
where the product $\alpha f$ is understood in the sense of the module over $L_\infty(\Omega)$:
\[
(\alpha f)(\omega, s) = \alpha(\omega) f(\omega, s).
\]

(2) Define $S_K^*$ by
\[
S_K^* g(\omega, t) = \int_S K(\omega, t, s)\, g(\omega, s)\, d\nu(s).
\]
We verify that this operator is indeed adjoint to $S_K$. For $f \in \mathbf{R}$ and $g \in L_{2,\infty}(\Omega \times S)$ we have
\[
\langle S_K f, g \rangle_{L_{2,\infty}}(\omega)
= \int_S f(\omega, s)\, \overline{g(\omega, s)}\, d\nu(s).
\]
On the other hand, by the reproducing property from Proposition~\ref{RBI},
\[
f(\omega, s) = \langle \ell_{\mathbf{R}}(f)(\omega), K_\omega(\cdot, s) \rangle_{\mathcal{H}(\omega)}.
\]
Substituting, we obtain
\begin{align*}
\langle S_K f, g \rangle_{L_{2,\infty}}(\omega)
&= \int_S \langle \ell_{\mathbf{R}}(f)(\omega), K_\omega(\cdot,s) \rangle_{\mathcal{H}(\omega)}
   \overline{g(\omega,s)}\, d\nu(s) = \Big\langle \ell_{\mathbf{R}}(f)(\omega),
   \int_S K_\omega(\cdot,s)\, \overline{g(\omega,s)}\, d\nu(s) \Big\rangle_{\mathcal{H}(\omega)} \\
&= \langle \ell_{\mathbf{R}}(f)(\omega), \ell_{\mathbf{R}}(S_K^* g)(\omega) \rangle_{\mathcal{H}(\omega)} = \langle f, S_K^* g \rangle_{\mathbf{R}}(\omega).
\end{align*}
The transition from the first to the second line is justified by the Fubini theorem and the continuity of the bilinear form; the subsequent equalities follow from the definition of $S_K^*$ and the inner product in $\mathbf{R}$.

(3) For $f, h \in \mathbf{R}$ we check the self-adjointness of $S_K^* S_K$:
\begin{align*}
\langle (S_K^* S_K) f, h \rangle_{\mathbf{R}}(\omega)
&= \langle S_K f, S_K h \rangle_{L_{2,\infty}}(\omega)
 = \int_S f(\omega, s)\, \overline{h(\omega, s)}\, d\nu(s) \\
&= \overline{\int_S h(\omega, s)\, \overline{f(\omega, s)}\, d\nu(s)}
 = \overline{\langle (S_K^* S_K) h, f \rangle_{\mathbf{R}}(\omega)} = \langle f, (S_K^* S_K) h \rangle_{\mathbf{R}}(\omega).
\end{align*}
Thus, $S_K^* S_K$ is self-adjoint.

(4) For each $(\omega, s) \in \Omega \times S$, the function $K(\omega, \triangle, s)$ belongs to $\mathbf{R}$ as an element of the vector-valued RKHM defined by
\[
(K(\omega, \triangle, s))(\omega', t) =
\begin{cases}
K(\omega, t, s), & \text{if } \omega' = \omega,\\
0, & \text{if } \omega' \neq \omega.
\end{cases}
\]
In other words, $K(\omega,\triangle, s)$ is an element of $\mathbf{R}$ supported on the fibre corresponding to~$\omega$.

Applying $S_K^*$ to this element gives
\[
S_K^*(K(\omega, \triangle, s))(\omega', t)
 = \int_S K(\omega', t, u)\, (K(\omega, \triangle, s))(\omega', u)\, d\nu(u).
\]

Consider two cases:

If $\omega' = \omega$, then
\[
S_K^*(K(\omega, \triangle, s))(\omega, t)
 = \int_S K(\omega, t, u)\, K(\omega, u, s)\, d\nu(u)
 = K(\omega, t, s),
\]
where the last equality follows from the reproducing property of the classical RKHS $\mathcal{H}(\omega)$ with kernel $K_\omega(t,u) = K(\omega, t, u)$.

If $\omega' \neq \omega$, then
\[
S_K^*(K(\omega, \triangle, s))(\omega', t)
 = \int_S K(\omega', t, u)\, 0\, d\nu(u) = 0.
\]
Hence $S_K^*(K(\omega, \triangle, s)) = K(\omega, \triangle, s)$, as required.

(5) For $f \in \mathbf{R}$ and $(\omega, t) \in \Omega \times S$ we have
\begin{align*}
(T f)(\omega, t)
 &= \int_S K(\omega, t, s)\, f(\omega, s)\, d\nu(s)
   && \text{(definition of $T$)} \\
 &= S_K^*(f)(\omega, t)
   && \text{(definition of $S_K^*$)} \\
 &= S_K^*(S_K f)(\omega, t)
   && \text{(since $S_K f = f$ for $f \in \mathbf{R}$)} \\
 &= (S_K^* \circ S_K)(f)(\omega, t).
\end{align*}
Therefore, $T = S_K^* \circ S_K$ on $\mathbf{R}$.
\end{proof}

\begin{theorem}\label{FT}
Let $T : L_{2,\infty}(\Omega \times S) \to L_{2,\infty}(\Omega \times S)$ be a partial integral operator with kernel $K$ satisfying the conditions of Lemma~\ref{PP}. Then:

\begin{enumerate}
\item There exists a vector-valued reproducing kernel Hilbert space (RKHS) $\mathbf{R} \subset L_{2,\infty}(\Omega \times S)$ with reproducing kernel $K$.

\item If there exists another vector-valued RKHS $\mathbf{R}_1 \subset L_{2,\infty}(\Omega \times S)$ with the same reproducing kernel $K$ and an embedding operator $S_1 : \mathbf{R}_1 \to L_{2,\infty}(\Omega \times S)$ such that $T = S_1^* \circ S_1$, then there exists an isometric isomorphism $U : \mathbf{R} \to \mathbf{R}_1$ satisfying $S_1 \circ U = S_K$.
\end{enumerate}
\end{theorem}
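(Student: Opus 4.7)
To establish existence, I would construct $\mathbf{R}$ as the Kaplansky--Hilbert module of measurable sections of a Hilbert bundle whose fibers are the classical RKHSs associated with $K_\omega(t,s) := K(\omega,t,s)$. For each $\omega$, conditions (1)--(4) of Lemma~\ref{PP} imply that $K_\omega$ is a symmetric, positive semidefinite, square-integrable kernel on $S$, so the classical Moore--Aronszajn theorem yields a unique RKHS $\mathcal{H}(\omega)$ with reproducing kernel $K_\omega$. The continuous inclusion $\mathcal{H}(\omega) \hookrightarrow L_2(S)$, with operator norm bounded by $\|K_\omega\|_{L_2(S\times S)}^{1/2}$ (essentially bounded in $\omega$ by condition~4), provides the embedding into $L_{2,\infty}(\Omega\times S)$. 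The structural sections $L$ are formed from finite sums $\omega \mapsto \sum_i p(\alpha_i)(\omega)\,K(\omega,\cdot,s_i)$ with $\alpha_i \in L_\infty(\Omega)$ and $s_i \in S$; the three bundle axioms follow from measurability of $K$ together with the reproducing property, which renders $\omega \mapsto \sum_{i,j} p(\alpha_i)(\omega)\overline{p(\alpha_j)(\omega)}\,K(\omega,s_j,s_i)$ measurable, while density in each fiber uses the classical totality of $\{K_\omega(\cdot,s): s \in S\}$ in $\mathcal{H}(\omega)$. Setting $\mathbf{R} := L_0(\Omega,\mathcal{H})$ gives a Kaplansky--Hilbert module realized as a submodule of $L_{2,\infty}(\Omega\times S)$ via the fiberwise inclusions, and continuity of $\delta_{\omega,s}$ in the $L_\infty(\Omega)$-valued norm follows from the fiberwise Cauchy--Schwarz bound $|f(\omega,s)| \le \|f(\omega,\cdot)\|_{\mathcal{H}(\omega)}\sqrt{K(\omega,s,s)}$.

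\textbf{Part (2).} For uniqueness, I would exploit the operator identity $T = S_K^* S_K = S_1^* S_1$ to identify $\mathbf{R}$ and $\mathbf{R}_1$ on the dense range of the adjoint embeddings. For any $g \in L_{2,\infty}(\Omega\times S)$, one computes
\[
\|S_K^* g\|_{\mathbf{R}}^2 = \langle g, S_K S_K^* g \rangle_{L_{2,\infty}} = \langle g, Tg \rangle_{L_{2,\infty}} = \|S_1^* g\|_{\mathbf{R}_1}^2
\]
as elements of $L_\infty(\Omega)$. Moreover, the explicit integral formula in Lemma~\ref{FP}(2) gives $S_K^* g = S_1^* g$ as elements of $L_{2,\infty}(\Omega\times S)$, since both sides are determined only by the common kernel $K$ and by $g$. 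Hence $U_0 : S_K^* g \mapsto S_1^* g$ is a well-defined $L_\infty(\Omega)$-isometry from $\operatorname{range}(S_K^*)$ onto $\operatorname{range}(S_1^*)$ that acts as the identity on functions. Since $S_K$ is injective (being the inclusion in $L_{2,\infty}$), the Kaplansky--Hilbert module analogue of $\overline{\operatorname{range}(S_K^*)} = \ker(S_K)^\perp$ --- obtained fiberwise via Proposition~\ref{BI} --- shows that $\operatorname{range}(S_K^*)$ is $(bo)$-dense in $\mathbf{R}$; the same argument applies to $\mathbf{R}_1$. Extending $U_0$ by $(bo)$-continuity yields the isometric isomorphism $U : \mathbf{R} \to \mathbf{R}_1$, and $S_1 \circ U = S_K$ is then immediate because $U$ preserves pointwise function values.

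\textbf{Main obstacle.} The principal subtlety is the $(bo)$-density argument in part (2): the classical orthogonal-complement decomposition and the continuous extension of bounded operators must be transferred to the Kaplansky--Hilbert module setting, where norm convergence is replaced by $(bo)$-convergence. The cleanest route is to work fiberwise via Proposition~\ref{BI}, invoking the classical Hilbert space orthogonal-complement result on each $\mathcal{H}(\omega)$ and then reassembling, with care taken to preserve the module structure and the agreement between $\mathbf{R}$ and $\mathbf{R}_1$ as subsets of $L_{2,\infty}$. In part (1), the less obvious step is verifying measurability of norms for general sections in $L$ built from arbitrary $L_\infty(\Omega)$-coefficients, which requires combining measurability of $K$ with the properties of the numerical lifting $p$.
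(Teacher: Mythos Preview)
Your proposal is correct. Part~(1) is essentially the paper's approach: the paper also builds $\mathbf{R}$ as $L_0(\Omega,\mathcal{H})$ with fibers the classical Moore--Aronszajn RKHSs $\mathcal{H}(\omega)$, invoking Proposition~\ref{RBI} directly. The only cosmetic difference is that the paper's structural sections are tensor products $\sum_k \varphi_k\times\psi_k$ with $\varphi_k\in L_2(S)$, $\psi_k\in L_\infty(\Omega)$, whereas you use $L_\infty(\Omega)$-combinations of the kernel slices $K(\omega,\cdot,s_i)$; either choice works.

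Part~(2), however, follows a genuinely different route. The paper argues \emph{pointwise}: since $\mathbf{R}$ and $\mathbf{R}_1$ share the kernel $K$, their point-evaluation functionals coincide, so for each $f\in\mathbf{R}$ there is a unique $g\in\mathbf{R}_1$ with $g(\omega,s)=f(\omega,s)$ everywhere, and one sets $U(f)=g$; the relation $S_1\circ U=S_K$ is then tautological. This is the Moore--Aronszajn uniqueness argument lifted verbatim to the module setting---short, but the paper does not spell out why $U$ is an isometry or why such $g$ must exist in $\mathbf{R}_1$. Your approach is operator-theoretic: you exploit $T=S_K^*S_K=S_1^*S_1$ to match norms on $\operatorname{range}(S_K^*)$, then extend by $(bo)$-density via the fiberwise relation $\overline{\operatorname{range}(S_K^*)}=\ker(S_K)^\perp=\mathbf{R}$. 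Your argument is more explicit about the isometry and gives a clear mechanism for extension, at the cost of needing the density/orthogonal-complement machinery in Kaplansky--Hilbert modules (which, as you note, reduces to fiberwise classical facts via Proposition~\ref{BI}). One point to make precise: invoking the integral formula of Lemma~\ref{FP}(2) for $S_1^*$ tacitly assumes the reproducing identity $f(\omega,s)=\langle f,K_\omega(\cdot,s)\rangle_{\mathbf{R}_1}$ holds in $\mathbf{R}_1$ as well; this is implicit in ``$\mathbf{R}_1$ has reproducing kernel $K$'' but is worth stating, since Lemma~\ref{FP} as written is proved only for the canonical $\mathbf{R}$.
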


\begin{proof}
(1) Since the kernel $K$ satisfies the assumptions of Lemma~\ref{PP}, for each $\omega \in \Omega$ the function $K_\omega(t,s) = K(\omega,t,s)$ defines a classical reproducing kernel Hilbert space $\mathcal{H}(\omega)$ on $S$. By Proposition~\ref{RBI}, there exists a vector-valued RKHM $\mathbf{R} \cong L_0(\Omega, \mathcal{H})$ with reproducing kernel $K$.

(2) Because both spaces $\mathbf{R}$ and $\mathbf{R}_1$ share the same reproducing kernel $K$, the point-evaluation functionals are identical in both:
\[
\delta_{\omega,s}(f) = f(\omega,s) \quad \text{for all } f \in \mathbf{R} \text{ and } f \in \mathbf{R}_1.
\]
Define a mapping $U : \mathbf{R} \to \mathbf{R}_1$ as follows. For each $f \in \mathbf{R}$, let $U(f) = g$, where $g \in \mathbf{R}_1$ is the unique element satisfying
\[
g(\omega,s) = f(\omega,s) \quad \text{for all } (\omega,s) \in \Omega \times S.
\]
This definition is well-posed since $f \in \mathbf{R}$ satisfies the reproducing property with respect to $K$, and $\mathbf{R}_1$ is also an RKHS with the same kernel; therefore, there exists a unique $g \in \mathbf{R}_1$ having the same evaluation functionals.

The commutativity of the mappings is expressed by the equality
$ S_1 \circ U = S_K $, which means that for any $ f \in \mathbf{R} $
\[
S_1(U(f))(\omega, s) = f(\omega, s) = S_K(f)(\omega, s).
\]
\end{proof}

\section{Vector-Valued Mercer Theorem}

The central result of this work is a generalization of the classical Mercer theorem to vector-valued reproducing kernel Hilbert spaces (RKHS) defined in Kaplansky–Hilbert modules. In contrast to the scalar case, one must simultaneously control the spectral properties of the family of operators $\{T_\omega\}_{\omega \in \Omega}$ and the measurability of eigenfunctions with respect to the parameter $\omega$.

The proof is based on establishing the equivalence of three conditions: the completeness of the system of eigenvectors, the injectivity of the adjoint embedding operator, and the existence of a pointwise Mercer-type decomposition. A key role is played by the isometric isomorphism between the vector-valued RKHS and the space of measurable sections of a Hilbert bundle. It should be noted that these results extend those obtained in \cite[Section~3]{Steinwart2012}.

\subsection{Proof of Theorem~\ref{MT}}

We now turn to the proof of Theorem \ref{MT}.
The equivalence of the three conditions is established via the cycle of implications $(1)\Rightarrow(2)\Rightarrow(3)\Rightarrow(1),$ employing the isometric isomorphism from Proposition 2 together with the classical Mercer theorem applied to each fiber $\mathcal{H}(\omega)$.

$(1) \Rightarrow (2)$
Assume that the system $(\sqrt{\lambda_n} x_n)_{n \in \mathbb{N}}$ constitutes an orthonormal basis in $\mathbf{R}$.
Suppose $S_K^* g = 0$ for some $g \in L_{2,\infty}(\Omega \times S)$. By virtue of the isometric isomorphism $\mathbf{R} \cong L_0(\Omega, \mathcal{H})$, the condition $S_K^* g = 0$ is equivalent to requiring that for all $f \in \mathbf{R}$:
\[
\langle S_K^* g, f \rangle_{\mathbf{R}}(\omega) = \langle g, S_K f \rangle_{L_{2,\infty}}(\omega) = 0.
\]

Invoking the isomorphism and the reproducing property:
\[
\langle g, f \rangle_{L_{2,\infty}}(\omega) = \int_S g(\omega,s) \overline{f(\omega,s)} \, d\nu(s) = \int_S g(\omega,s) \overline{\langle \ell_{\mathbf{R}}(f)(\omega), K_\omega(\cdot,s) \rangle_{\mathcal{H}(\omega)}} \, d\nu(s)
\]
\[
= \left\langle \int_S g(\omega,s) K_\omega(\cdot,s) \, d\nu(s), \ell_{\mathbf{R}}(f)(\omega) \right\rangle_{\mathcal{H}(\omega)} = \langle T_\omega(g(\omega,\cdot)), \ell_{\mathbf{R}}(f)(\omega) \rangle_{\mathcal{H}(\omega)}.
\]
Since this holds for all $f \in \mathbf{R}$ and the system $(\sqrt{\lambda_n(\omega)} x_n(\omega,\cdot))_{n \in \mathbb{N}}$ forms a basis in the corresponding fibers $\mathcal{H}(\omega)$ (by hypothesis and the isomorphism), we deduce that:
\[
\langle T_\omega(g(\omega,\cdot)), \sqrt{\lambda_n(\omega)} x_n(\omega,\cdot) \rangle_{\mathcal{H}(\omega)} = 0
\]
for all $n$ and almost all $\omega$.

Since $T_\omega(\sqrt{\lambda_k(\omega)} x_k(\omega,\cdot)) = \lambda_k(\omega) \sqrt{\lambda_k(\omega)} x_k(\omega,\cdot)$, this yields:
\[
\lambda_k(\omega) \sqrt{\lambda_k(\omega)} \langle g(\omega,\cdot), x_k(\omega,\cdot) \rangle_{L_2(S)} = 0.
\]
For $\lambda_k(\omega) > 0$ we thus obtain $\langle g(\omega,\cdot), x_k(\omega,\cdot) \rangle_{L_2(S)} = 0$ for all $k$.

Since the range of $T_\omega$ is generated by the system $\{\sqrt{\lambda_k(\omega)} x_k(\omega,\cdot)\}_{k \in \mathbb{N}}$ and this system forms a basis in $\mathcal{H}(\omega)$ via the isomorphism, we conclude that $g(\omega,\cdot) = 0$ almost everywhere.
Therefore, $g = 0$ and $S_K^*$ is injective.

$(2) \Rightarrow (3)$ Assume that $S^*_K$ is injective. We shall establish that the kernel $K$ admits a Mercer decomposition.

By the isometric isomorphism $\textbf{R }\cong L_0(\Omega, \mathcal{H})$ from Proposition \ref{RBI}, the injectivity of $S^*_K$ is equivalent to the property that for each $\omega \in \Omega$ the operator $T_\omega : \mathcal{H}(\omega) \to \mathcal{H}(\omega)$ has dense range in $\mathcal{H}(\omega)$.
Indeed, let $h \in \mathcal{H}(\omega)$ be orthogonal to the range of $T_\omega$:
\[
\langle T_\omega \varphi, h \rangle_{\mathcal{H}(\omega)} = 0 \quad \text{for all } \varphi \in L_2(S)
\]
By the definition of $T_\omega$ and the reproducing property:
\[
\int_S \int_S K_\omega(t,u)\varphi(u) \, d\nu(u) \, h(t) \, d\nu(t) = 0.
\]
Interchanging the order of integration:
\[
\int_S \varphi(u) \left[ \int_S K_\omega(t,u)h(t) \, d\nu(t) \right] d\nu(u) = 0.
\]
Since this holds for all $\varphi \in L_2(S)$:
$$\int_S K_\omega(t,u)h(t) \, d\nu(t) = 0 \quad \text{for a.e. } u \in S$$
Consider the function $g \in L_{2,\infty}(\Omega \times S)$ defined by:
\[
g(\omega', s) = \langle h, K_{\omega'}(\cdot, s) \rangle_{\mathcal{H}(\omega')} \cdot \chi_{\{\omega\}}(\omega')
\]
where $\chi_{\{\omega\}}$ denotes the characteristic function of the singleton $\{\omega\}$.
By construction, $g \in L_{2,\infty}(\Omega \times S)$, since:
\begin{itemize}
\item $h \in \mathcal{H}(\omega) \subset L_2(S)$ is bounded
\item $K_{\omega'}(\cdot, s) \in \mathcal{H}(\omega')$ for each $\omega'$ and $s$
\item The support of $g$ is contained in $\{\omega\} \times S$
\end{itemize}
We compute $S^*_K g$:
\[
(S^*_K g)(\omega', t) = \int_S K(\omega', t, s)g(\omega', s) \, d\nu(s).
\]
For $\omega' = \omega$:
\begin{align*}
(S^*_K g)(\omega, t) &= \int_S K_\omega(t, s)\langle h, K_\omega(\cdot, s) \rangle_{\mathcal{H}(\omega)} \, d\nu(s) = \left\langle h, \int_S K_\omega(t, s)K_\omega(\cdot, s) \, d\nu(s) \right\rangle_{\mathcal{H}(\omega)} \\
&= \langle h, T_\omega(K_\omega(t, \cdot)) \rangle_{\mathcal{H}(\omega)} = \langle h, K_\omega(t, \cdot) \rangle_{\mathcal{H}(\omega)} \quad
\end{align*}
For $\omega' \neq \omega$:
$$(S^*_K g)(\omega', t) = 0 \quad \text{(since } g(\omega', s) = 0\text{)}$$
From the orthogonality condition of $h$ to the range of $T_\omega$, we have $\langle h, K_\omega(t, \cdot) \rangle_{\mathcal{H}(\omega)} = 0$ for all $t \in S.$
Consequently, $S^*_K g = 0$.

By the injectivity of $S^*_K$, we obtain $g = 0$, which implies:
$$\langle h, K_\omega(\cdot, s) \rangle_{\mathcal{H}(\omega)} = 0 \quad \text{for all } s \in S.$$
Since $\{K_\omega(\cdot, s) : s \in S\}$ spans a dense subspace in $\mathcal{H}(\omega)$ (by the construction of the RKHS), we conclude that $h = 0$.
The family $\{T_\omega\}_{\omega\in\Omega}$ forms a measurable bundle of compact self-adjoint operators, whence the eigenvalues $\lambda_n(\omega)$ and eigenfunctions $x_n(\omega,\cdot)$ can be chosen to depend measurably on $\omega$. By the classical Mercer theorem \cite{Mercer1909} for a compact self-adjoint positive semi-definite operator with dense range, we have:
\[
K_\omega(t,s)=\sum_{n=1}^\infty \lambda_n(\omega)\,x_n(\omega,t)\,x_n(\omega,s),
\]
where the series converges in $L_2(S\times S)$ and almost everywhere with respect to $\nu\times\nu$. Define
\[
N_\omega=\Bigl\{(t,s)\in S\times S:\sum_{n}\lambda_n(\omega)\,x_n(\omega,t)\,x_n(\omega,s)\neq K_\omega(t,s)\Bigr\},
\]
the exceptional set of $\nu \times \nu$-measure zero for the Mercer decomposition at fixed $\omega$.

By Fubini's theorem:
$$(\mu \times \nu \times \nu)(N) = \int_\Omega (\nu \times \nu)(N_\omega) \, d\mu(\omega) = \int_\Omega 0 \, d\mu(\omega) = 0$$
Hence, for all $(\omega, t, s) \in (\Omega \times S \times S) \setminus N$:
$$K(\omega, t, s) = \sum_{n=1}^{\infty} \lambda_n(\omega) x_n(\omega, t) x_n(\omega, s)$$
where the series converges absolutely.

$(3) \Rightarrow (1)$.
Assume that the kernel $K$ admits a Mercer decomposition. Define elements $e_n \in L_{2,\infty}(\Omega \times S)$ by:
\[
e_n(\omega,t) = \sqrt{\lambda_n(\omega)} x_n(\omega,t)
\]

Employing the isometric isomorphism $\mathbf{R} \cong L_0(\Omega, \mathcal{H})$ and the vector-valued lifting $\ell_{\mathbf{R}}$, we verify orthonormality in $\mathbf{R}$:
\[\langle e_n, e_m \rangle_{\mathbf{R}}(\omega) = \langle \ell_{\mathbf{R}}(e_n)(\omega), \ell_{\mathbf{R}}(e_m)(\omega) \rangle_{\mathcal{H}(\omega)}
\]
\[
= \left\langle \sqrt{\lambda_n(\omega)} x_n(\omega,\cdot), \sqrt{\lambda_m(\omega)} x_m(\omega,\cdot) \right\rangle_{\mathcal{H}(\omega)}
\]
\[
= \sqrt{\lambda_n(\omega) \lambda_m(\omega)} \langle x_n(\omega,\cdot), x_m(\omega,\cdot) \rangle_{\mathcal{H}(\omega)} = \sqrt{\lambda_n(\omega) \lambda_m(\omega)} \delta_{nm}.
\]
When $n = m$, we have $\langle e_n, e_n \rangle_{\mathbf{R}}(\omega) = \lambda_n(\omega)$, whence $\|e_n\|_{\mathbf{R}}(\omega) = \sqrt{\lambda_n(\omega)}$.
The normalized elements $\tilde{e}_n = e_n/\|e_n\|_{\mathbf{R}}$ form an orthonormal system.

Let $f \in \mathbf{R}$ and $(\omega,s) \in \Omega \times S$. By the reproducing property and the kernel decomposition:
$$f(\omega,s) = \langle \ell_{\mathbf{R}}(f)(\omega), K_\omega(\cdot,s) \rangle_{\mathcal{H}(\omega)}$$
$$= \left\langle \ell_{\mathbf{R}}(f)(\omega), \sum_{n=1}^{\infty} \lambda_n(\omega) x_n(\omega,\cdot) \overline{x_n(\omega,s)} \right\rangle_{\mathcal{H}(\omega)}$$
$$= \sum_{n=1}^{\infty} \lambda_n(\omega) \overline{x_n(\omega,s)} \langle \ell_{\mathbf{R}}(f)(\omega), x_n(\omega,\cdot) \rangle_{\mathcal{H}(\omega)}$$
Via the isomorphism, this is equivalent to the expansion of $f$ in the system $\{e_n\}$ in $\mathbf{R}$.
Hence, the system $(\sqrt{\lambda_n} x_n)_{n \in \mathbb{N}}$ forms an orthonormal basis in $\mathbf{R}$. $\square$

\section{Conclusion}
In this work, we have established a vector-valued version of Mercer's theorem for reproducing kernel Hilbert spaces defined within Kaplansky–Hilbert modules over $L_\infty(\Omega)$. We have proved the equivalence of three key properties of the partially integral operator with positive definite kernel: the basis property of the system of eigenfunctions, the injectivity of the adjoint embedding operator, and the existence of a pointwise spectral decomposition of the kernel. An isometric isomorphism between a Kaplansky–Hilbert module and the space of measurable sections of a Hilbert bundle has been constructed, together with a vector-valued lifting ensuring the consistency of inner products between the original module and its bundle representation.

\vspace{0.2cm}
\begin{center}
\textsc{Acknowledgment}
\end{center}
The second author was partially supported by the Russian Ministry of Education and Science, agreement no.075-02-2025-1633.



\fullauthor{Allabay Dzhalgasovich Arziev}
\address{V.I.Romanovskiy Institute of Mathematics Uzbekistan Academy of Sciences,\\
Tashkent, Uzbekistan,}
\email{allabayarziev@inbox.ru}
\address{Karakalpak State University named after Berdakh,}
\email{allabayarziev@karsu.uz}

\fullauthor{Kudaybergenov Karimbergen Kadirbergenovich}
\address{Vladikavkaz Scientific Center of the Russian Academy of Science,
		Vladikavkaz, Russian Federation}
\email{karim2006@mail.ru}

\fullauthor{Parakhatdiin Rakhman uli Orinbaev}
\address{Vladikavkaz Scientific Center of the Russian Academy of Science,
		Vladikavkaz, Russian Federation}
\email{paraxatorinbaev@gmail.com}

\label{lastpage}  

\end{document}